\theoremstyle{plain}
\newtheorem{theorem}{Theorem}
\newtheorem{lemma}[theorem]{Lemma}
\theoremstyle{definition}
\theoremstyle{remark}
\newtheorem{remark}[theorem]{Remark}
\newcommand{\la}{\langle}
\newcommand{\ra}{\rangle}
\newcommand{\trace}{\mathop{\operator@font Trace}}
\newcommand{\vspan}{\mathop{\operator@font Span}}
\newcommand{\Int}{\mathop{\operator@font Int}}
\newcommand{\grad}{\mathop{\operator@font grad}}
\newcommand{\nor}{\mathop{\operator@font nor}}
\newcommand{\Sp}{\mathop{\operator@font Spec}}
\newcommand{\End}{\mathop{\operator@font End}}
\newcommand{\diver}{\mathop{\operator@font div}}
\newcommand{\Ric}{\mathop{\operator@font Ric}}
\begin{document}

\title{$\mathfrak A$-principal Hopf hypersurfaces in complex quadrics
}

\author{Tee-How \textsc{Loo}
\\
Institute of Mathematical Sciences, University of Malaya \\
50603 Kuala Lumpur, Malaysia.	\\ 
\ttfamily{looth@um.edu.my}
}

\date{}
\maketitle

\abstract{
A real hypersurface in the complex quadric  $Q^m=SO_{m+2}/SO_mSO_2$  is said to be $\mathfrak A$-principal if its unit normal vector field is 
singular of type $\mathfrak A$-principal everywhere.
In this paper, we show that a $\mathfrak A$-principal Hopf hypersurface in 
$Q^m$, $m\geq3$ is an open part of a tube around a totally geodesic $Q^{m+1}$ in $Q^m$.
We also show that such real hypersurfaces are the only contact real hypersurfaces in $Q^m$.
The classification for  pseudo-Einstein real hypersurfaces in $Q^m$, $m\geq3$, is also obtained.

\medskip\noindent
\emph{2010 Mathematics Subject Classification.}
Primary  53C40 53B25; Secondary 53C15.

\medskip\noindent
\emph{Key words and phrases.}
Hopf hypersurfaces, Contact structure, pseudo-Einstein real hypersurfaces, Complex quadrics

\section{Introduction.}
A natural research problem that arise in the theory of Riemannian submanifolds, when the ambient spaces are equipped with some additional geometric structures, is to study the interactions between these structures and the submanifold structure on its submanifolds.

For real hypersurfaces in a Hermitian manifold with complex structure $J$,
a geometric condition naturally being considered is to require the line bundle $JT^\perp M$ over $M$ to be invariant under the shape operator $S$ of $M$, that is, $SJT^\perp M\subset JT^\perp M$.
Such real hypersurfaces are known as Hopf hypersurfaces and possess some interesting geometric properties, for instance, Hopf hypersurfaces in a complex projective space $\mathbb CP^m$ are curvature adapted and can be realized as tubes around complex submanifolds in $\mathbb CP^m$ (cf. \cite{borisenko}).   

Similar research has been carried out for real hypersurfaces in quarternionic Kaehler manifolds.
Martinez and Perez  classified real hypersurfaces $M$ with constant principal curvatures in quarternionic projective spaces $\mathbb HP^m$ of which the vector bundle 
$\mathfrak JT^\perp M$ over $M$ is invariant under the shape operator $S$ of $M$, where $\mathfrak J$ is the quarternionic Kaehler structure of $\mathbb HP^m$ (cf. \cite{martinez-perez}).
This results has been improved in \cite{berndt} by removing the constancy assumption of the principal curvatures.

The complex two-plane Grassmannian $G_2(\mathbb C^{m+2})$ is the uniques compact Kaehler, quarternionic Kaehler manifold with positive scalar curvature. Two natural conditions to be considered are both $JT^\perp M$ and $\mathfrak JT^\perp M$ are invariant under the shape operator $S$ of real hypersurfaces.
Berndt and Suh used these properties to characterized tubes around $G_2(\mathbb C^{m+1})$ and tubes around $\mathbb HP^{m/2}$ in $G_2(\mathbb C^{m+2})$ (cf. \cite{berndt-suh1}).
An extension 
to the non-compact dual of $G_2(\mathbb C^{m+2})$
can be found in \cite{berndt-suh}. 

In this paper, we study real hypersurfaces, in the $m$-dimensional complex quadric $Q^m=SO_{m+2}/SO_mSO_2$, 
$m\geq2$. The complex quadric $Q^m$, is a Hermitian symmetric space of rank two.
It is the only compact non-totally geodesic parallel complex hypersurface in the complex projective space $\mathbb CP^{m+1}$ (cf. \cite{nakagawa-takagi}).
This property determines on $Q^m$, on top of the complex structure $J$, 
another distinguished geometric structure $\mathfrak A$ on $Q^m$, which is.
 a complex line bundle over $Q^m$ generated by conjugations on tangent spaces of $Q^m$ induced by the shape operator of $Q^m$ in $\mathbb CP^{m+1}$.

With respect to the structure $\mathfrak A$, there are two types of singular tangent vectors for $Q^m$, namely, 
$\mathfrak A$-principal and $\mathfrak A$-isotropic singular tangent vectors.
A real hypersurface $M$ in $Q^m$ is said to be \emph{$\mathfrak A$-principal} (resp. \emph{$\mathfrak A$-isotropic})
if the normal bundle of $M$ consists of $\mathfrak A$-principal (resp. $\mathfrak A$-isotropic) singular tangent vectors in $Q^m$.

Typical examples for $\mathfrak A$-principal (resp. $\mathfrak A$-isotropic) real hypersurfaces are the tubes around 
totally geodesic $Q^{m-1}$ (resp. $\mathbb CP^k$, $m=2k$ is even) in $Q^m$.
These real hypersurfaces have a number of interesting geometric properties,
for instance, both of them are Hopf and tubes around totally geodesic $\mathbb CP^{k}$ in $Q^{2k}$ are the only real hypersurfaces in $Q^{2k}$ with isometric Reeb flow (cf. \cite{berndt-suh2}), while tubes around $Q^{m-1}$ in $Q^m$ appeared to be the only known examples of contact real hypersurfaces in $Q^m$ (cf. \cite{berndt1}).

This raises two interesting problems:  classifying  $\mathfrak A$-principal Hopf hypersurfaces and $\mathfrak A$-isotropic Hopf hypersurfaces in the complex quadric $Q^m$. In this paper, we first study the formal problem and 
show that these real hypersurfaces are indeed tubes around $Q^{m-1}$ in $Q^m$
(cf. Theorem~\ref{thm:main}).

Let $M$ be a real hypersurface in a K\"ahler manifold $\hat M$. 
Denote by $\Phi(\cdot,\cdot):=\la\cdot,\phi~\cdot\ra$ the fundamental $2$-form.
If there exist a non-zero function $\rho$ on $M$ such that $d\eta=\rho\Phi$, then $M$  admits a contact structure.
In this case, we called $M$ a \emph{contact real hypersurface} in $\hat M$.
In  \cite{berndt1},  Berndt asked whether  tubes around totally geodesic $Q^{m-1}$ are the only contact real hypersurfaces in $Q^m$.
We shall give an affirmative answer for this question (cf. Theorem~\ref{thm:contact}).

A almost contact metric manifold $M$ is said to be \emph{pseudo-Einstein} if there exist constants $a$, $b$ such that 
its Ricci tensor $\Ric$ is given by 
\begin{align*} 
\Ric X=&aX+b\eta(X)\xi.	
\end{align*}
Pseudo-Einstein real hypersurfaces in non-flat complex space forms were studied in \cite{cecil-ryan, ivey-ryan, kim-ryan, kon, montiel}.
We study pseudo-Einstein real hypersurfaces in $Q^m$, $m\geq 3$, and 
show that a complete pseudo-Einstein real hypersurfaces must be a special kind of  tubes around totally geodesic $Q^{m-1}$ 
(cf. Theorem~\ref{thm:Einstein-2}).

This paper is organized as follows:
In Sect. 2,
we take a quick revision on the geometric structures on complex quadrics $Q^m$.
In Sect. 3, we fix notations and establish a general framework for understanding geometry of real hypersurfaces $M$ in $Q^m$.
We derive some general identities for Hopf hypersurfaces in $Q^m$ in Sect. 4.
In particular, we show that the only Hopf hypersurfaces with constant Reeb principal curvature are the  $\mathfrak A$-principal and $\mathfrak A$-isotropic ones (cf. Lemma~\ref{lem:7}).
The main results are proved in the last three sections. 
In Sect. 5--6, we show that
the following statements are equivalent.
\begin{enumerate}
\item $M$ is an open part of a tube around a totally geodesic $Q^{m-1}$ in $Q^m$.
\item $M$ is a $\mathfrak A$-principal Hopf hypersurface in $Q^m$.
\item $M$ is contact real hypersurface in $Q^m$.
\end{enumerate}
We study pseudo-Einstein real hypersurfaces  in Sect. 7. 
A classification for a complete pseudo-Einstein real hypersurfaces in $Q^m$ is obtained.


\section{The complex quadrics}

We denote by $\mathbb CP^{m+1}$ the $(m+1)$-dimensional complex projective space of constant holomorphic sectional curvature $4$ with respect to the Fubini-Study metric $\la,\ra$.
Each point $[z]\in\mathbb CP^{m+1}$ can be regarded as a complex line in $\mathbb C^{m+2}$ spanned by 
$z\in\mathbb C_{\times}^{m+1}$. Up to identification, the tangent space $T_{[z]}\mathbb CP^{m+1}$ is given by
\[
T_{[z]}\mathbb CP^{m+1}=\mathbb C^{m+2}\ominus[z]=\{w\in\mathbb C^{m+2}:\la w,z\ra_{\mathbb C}=0\}
\]
where $\la,\ra_{\mathbb C}$ is the Hermitian inner product on $\mathbb C^{m+2}$.

The $m$-dimensional complex quadric $Q^m$ is a complex hypersurface defined by the quadratic equation $z_0^2+z_1^2+\cdots+z_{m+1}^2=0$ in $\mathbb CP^{m+1}$, which is isometric to the real Grassmannian of oriented two-planes of $\mathbb R^{m+2}$ and is a compact Hermitian symmetric space of rank two.

We denote by $J$ both the complex structure of $\mathbb CP^{m+1}$ and that induced on $Q^m$, and by  
$\la ,\ra$ as well the induced metric tensor on $Q^m$. 
As $Q^2$ is isometric to $S^2\times S^2$, we will consider $m\geq3$ in the main part of the paper.

At each $[z]\in Q^m$, up to identification, the normal space $T_{[z]}^\perp Q^m=[\bar z]$ and 
tangent space
$T_{[z]}Q^m=\mathbb C^{m+2}\ominus([z]\oplus[\bar z])$. 
Denote by $A_\zeta$ the shape operator of $Q^m$ in $\mathbb CP^{m+1}$ with respect to 
a unit vector $\zeta \in T_{[z]}^\perp Q^m$. It is known that 
$A_\zeta$ is a self-adjoint involution on $T_{[z]}Q^m$ and satisfies $A_\zeta J+JA_\zeta=0$. 
In other word, $A_\zeta$ is a conjugation on $T_{[z]}Q^m$ with respect to the Hermitian metric $\la ,\ra_{\mathbb C}$
given by
\[
\la X,Y\ra_{\mathbb C}=\la X,Y\ra+\sqrt{-1}\la X,JY\ra
\]
for any $X,Y\in T_{[z]}Q^m$.

Let $V(A_\zeta)$ (resp.  $JV(A_\zeta)$) be the $(+1)$-eigenspace (resp. the $(-1)$-eigenspace) of $A_\zeta$. Then we have
\[
T_{[z]}Q^m=V(A_\zeta)\oplus JV(A_\zeta),
\]
and $A_\zeta$ defines a real structure $V(A_\zeta)$ on $T_{[z]}Q^m$.
In particular, for $\zeta=\bar z$, the shape operator $A_{\bar z}w=-\bar w$,
for each $w\in T_{[z]}Q^m$, 
$V(A_{\bar z}) =\mathbb R^{m+2}\cap  T_{[z]}Q^m$ and 
$JV(A_{\bar z}) =\sqrt{-1}\mathbb R^{m+2}\cap  T_{[z]}Q^m$
(cf. \cite{reckziegel}).

The $\mathbb C Q$-structure $\mathfrak A^0_{[z]}:=\{\lambda A_\zeta : \lambda \in S^1\}$  
on $T_{[z]}Q^m$ is independent of the choice of $\zeta$ as every unit vectors $\zeta, \zeta'\in T_{[z]}^\perp Q^m$ can be related by
$\zeta'=\lambda\zeta$ for some $\lambda\in S^1$.
It follows that $\mathfrak A^0=\cup_{[z]\in Q^m}\mathfrak A^0_{[z]}$ is a $S^1$-bundle over $Q^m$.
We can also construct a complex line bundle $\mathfrak A=\cup_{[z]\in Q^m}\mathbb C\otimes A_\zeta$.
over $Q^m$.

We corresponds to each unit vector field $\zeta$ normal to $Q^m$ in $\mathbb CP^{m+1}$ a section $A_\zeta$ of 
$\mathfrak A$.
Denote by $\hat\nabla$ and $\nabla^\perp$ the connections corresponding to $TQ^m$ and $T^\perp Q^m$ respectively, induced by the Levi-Civita connection of $\mathbb CP^{m+1}$. 
For vectors $X,Y$ tangent to $Q^m$ in $\mathbb CP^{m+1}$, we have
$\nabla^\perp_X\zeta=q_\zeta(X)J\zeta$, for some $1$-form $q_\zeta$ on $Q^m$.
Since $Q^m$ is a parallel complex hypersurface in $\mathbb CP^{m+1}$ and $A_{J\zeta}=JA_\zeta$, 
we have
\[
0=(\hat\nabla_X A)_\zeta Y=\hat\nabla_XA_\zeta Y-A_\zeta\hat\nabla_XY-A_{\nabla^\perp_X\zeta}Y 
=(\hat\nabla_XA_\zeta)Y-q_\zeta(X)JA_\zeta Y.
\] 
It follows that for each section $A$  of $\mathfrak A$, there exists a $1$-form $q$ on 
$Q^m$ such that 
\begin{align}\label{eqn:delA}
\hat\nabla A=JA\otimes q.
\end{align}
This implies that the subbundle $\mathfrak A$ of $\End(TQ^m)$ is parallel.

A non-zero vector $W\in T_{[z]}Q^m$ is said to be \emph{singular} if it is tangent to more than one maximal flat in $Q_m$. 
There are two types of singular tangent vectors for the complex quadric $Q^m$:
$\mathfrak A$-principal singular and $\mathfrak A$-isotropic singular.
A singular tangent vector $W$ is said to be \emph{$\mathfrak A$-principal} if there exists a conjugation $A\in\mathfrak A_{[z]}$ such that $W\in V(A)$.
If $\la AW,W\ra=\la AW,JW\ra=0$ for some (and then, for all) $A\in\mathfrak A_{[z]}$ then 
$W$ is called a \emph{$\mathfrak A$-isotropic singular vector}.

We have the following characterizations for $\mathfrak A$-principal singular tangent vectors.
\begin{lemma}
Let $W\in T_{[z]}Q^m$ be a unit vector. Then following are equivalent:
\begin{enumerate}
\item[(a)] $W$ is $\mathfrak A$-principal.
\item[(b)] There exists (and hence for all) $A\in\mathfrak A_{[z]}$ such that $AW\in\mathbb C W$. 
\item[(c)] For each $A\in\mathfrak A_{[z]}^0$, $\la AW,W\ra^2+\la AW,JW\ra^2=1$. 
\end{enumerate}
\end{lemma}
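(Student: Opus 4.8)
The plan is to reduce all three conditions to a single statement about one fixed conjugation and then to exploit the $S^1$-action that, by definition, sweeps out $\mathfrak A^0_{[z]}$. First I would fix a conjugation $A=A_\zeta\in\mathfrak A^0_{[z]}$ and record the structural facts to be used: $A^2=\mathrm{Id}$, $A$ is self-adjoint for the real metric $\langle,\rangle$ (hence a real isometry, so $\|AW\|=\|W\|=1$), and $AJ=-JA$. Since $\mathbb C W$ is a complex line, equal as a real subspace to $\mathbb R W\oplus\mathbb R JW$, I would write the orthogonal decomposition $AW=\alpha W+\beta JW+U$ with $\alpha=\langle AW,W\rangle$, $\beta=\langle AW,JW\rangle$ and $U\perp W,\,JW$, and read off from $\|AW\|^2=1$ that $\alpha^2+\beta^2+\|U\|^2=1$. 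Thus, for this $A$, one has $AW\in\mathbb C W$ if and only if $U=0$, equivalently $\alpha^2+\beta^2=1$.

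Next I would settle (a)$\Leftrightarrow$(b). The direction (a)$\Rightarrow$(b) is immediate, since $A'W=W$ forces $A'W\in\mathbb C W$. For the converse I would first note that, for a nonzero $A'\in\mathfrak A_{[z]}$, the condition $A'W\in\mathbb C W$ is equivalent to $A_\zeta W\in\mathbb C W$ (rescale by the complex scalar relating $A'$ to $A_\zeta$); this simultaneously yields the assertion in (b) that the property holds for \emph{every} such $A'$. Given $A_\zeta W\in\mathbb C W$, the isometry property forces $A_\zeta W=e^{\sqrt{-1}\psi}W$ for some real $\psi$; then $A':=e^{-\sqrt{-1}\psi}A_\zeta$ again lies in $\mathfrak A^0_{[z]}$, is again a conjugation, and satisfies $A'W=W$, i.e.\ $W\in V(A')$, which is (a).

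For (c) I would write a general element of $\mathfrak A^0_{[z]}$ as $A_\theta=e^{\sqrt{-1}\theta}A_\zeta$ and combine the defining relation $\langle X,Y\rangle_{\mathbb C}=\langle X,Y\rangle+\sqrt{-1}\langle X,JY\rangle$ with the $\mathbb C$-linearity of $\langle,\rangle_{\mathbb C}$ in its first argument. The crux is the one-line identity
\[
\langle A_\theta W,W\rangle+\sqrt{-1}\,\langle A_\theta W,JW\rangle=\langle A_\theta W,W\rangle_{\mathbb C}=e^{\sqrt{-1}\theta}\langle A_\zeta W,W\rangle_{\mathbb C},
\]
whose squared modulus gives $\langle A_\theta W,W\rangle^2+\langle A_\theta W,JW\rangle^2=|\langle A_\zeta W,W\rangle_{\mathbb C}|^2=\alpha^2+\beta^2$, independent of $\theta$. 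Hence the quantity in (c) takes the same value for every conjugation in $\mathfrak A^0_{[z]}$, and it equals $1$ precisely when $\alpha^2+\beta^2=1$, i.e.\ precisely when (b) holds; in particular it suffices to test a single conjugation. This closes the cycle (a)$\Leftrightarrow$(b)$\Leftrightarrow$(c).

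I do not expect a genuine obstacle: the only delicate point is the interplay between the real metric, the Hermitian form, and the $\mathbb C$-antilinearity of the conjugations. Concretely, I would want to check that $e^{\sqrt{-1}\theta}A_\zeta$ is again a conjugation (it squares to $\mathrm{Id}$ because $A_\zeta$ is antilinear, and it remains self-adjoint for $\langle,\rangle$) and that $\langle,\rangle_{\mathbb C}$ really is $\mathbb C$-linear in its first slot; once this bookkeeping is settled, the displayed identity does all the work.
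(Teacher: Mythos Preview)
Your argument is correct. The paper states this lemma without proof, so there is no authorial argument to compare against; your write-up supplies exactly the kind of elementary verification one would expect. The decomposition $AW=\alpha W+\beta JW+U$ together with $\|AW\|=1$ cleanly reduces (b) and (c) to the single equation $\alpha^2+\beta^2=1$, and your use of the Hermitian form $\langle\,,\,\rangle_{\mathbb C}$ to show that $\langle A_\theta W,W\rangle^2+\langle A_\theta W,JW\rangle^2$ is independent of $\theta$ is the right mechanism (and indeed the only place any real thought is required). The passage (b)$\Rightarrow$(a) via $A':=e^{-\sqrt{-1}\psi}A_\zeta$ is fine once one notes, as you do, that antilinearity of $A_\zeta$ makes $(A')^2=\mathrm{Id}$.

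One small remark: in (b) the paper writes $A\in\mathfrak A_{[z]}$ (the full complex line) rather than $\mathfrak A^0_{[z]}$, so taken literally the ``there exists'' clause is vacuous at $A=0$; you implicitly (and correctly) read it as ``there exists a nonzero $A$'', which is surely the intended meaning.
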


In general, for each unit tangent vector $W\in T_{[z]}Q^m$ and $A\in\mathfrak A_{[z]}^0$, 
we can write 
\[
W=\cos(t) X+\sin(t)JY
\]
where $X,Y\in V(A)$ are orthonormal vectors and $t\in[0,\pi/4]$.
$W$ is a $\mathfrak A$-principal (resp. $\mathfrak A$-isotropic) singular tangent vectors when $t=0$ (resp. $t=\pi/4$).  

From the Gauss equation of the complex hypersurface $Q^m$ in $\mathbb CP^{m+1}$, the curvature tensor $\hat R$ of
$Q^m$ is given by
\begin{align}\label{eqn:hatR}
\hat R(X,Y)=X\wedge Y+JX\wedge JY-2\la JX,Y\ra J+AX\wedge AY+JAX\wedge JAY
\end{align} 
for any $X,Y$ tangent to $Q^m$ and $A$ in $\mathfrak A^0$,
where $(U\wedge V)Z=\la V,Z\ra U-\la U,Z\ra V$. 


\section{Real  hypersurfaces in $Q^m$}

Let $M$ be a connected real hypersurface in $Q^m$, and let $N$ be a (local) unit vector field normal to $M$.
We define $\xi:=-JN$, $\eta$ the $1$-form dual to $\xi$ and $\phi:=J_{|TM}-\xi\otimes\eta$.
Then $(\phi,\xi,\eta)$ is an almost contact structure on $M$, that is,
\begin{equation*}\label{eqn:contact}
	\phi ^{2}X = -X + \eta (X) \xi,
	\quad \phi \xi = 0,
	\quad \eta (\phi X) = 0, 
	\quad \eta (\xi ) = 1. 
\end{equation*}
Denote by $\nabla$ the Levi-Civita connection, $\la,\ra$ the induced Riemannian metric and $S$ the shape operator of $M$. Then 
\begin{equation}\label{eqn:delxi}
	(\nabla_{X} \phi)Y=\eta(Y)SX-\la SX,Y\ra\xi,
	\quad \nabla_X \xi = \phi SX.
\end{equation}
for any $X,Y$ tangent to $M$. 

$M$ is said to be \emph{Hopf} if the Reeb vector field $\xi$ is principal.
It can be verify that $M$ is Hopf if and only if the integral curves of $\xi$ are geodesics in $M$.
The distribution $\mathcal D:=\ker\eta$ is known as the \emph{maximal holomorphic distribution}.

We called $M$ a \emph{$\mathfrak A$-principal} (resp. \emph{$\mathfrak A$-isotropic}) real hypersurface if 
the unit normal vector field $N$ is $\mathfrak A$-principal (resp. $\mathfrak A$-isotropic) everywhere.

We shall now fix some notations.
For any (local) section $A$ in $\mathfrak A^0$ and  vector field $X$ tangent to $M$, we denote by $V:=(AN)^T$, the tangential part of $AN$, 
$V^\circ=V-\eta(V)\xi$
and   
\begin{align}\label{eqn:B}
\left.\begin{aligned}
BX:=AX-\la V,X\ra N, \quad AN=V+fN \\
f:=\la AN,N\ra, \quad
g:=\la V,\xi\ra, \quad  
k:=|| V^\circ||.
\end{aligned}\right\}
\end{align}
We note that the entities $V$, $f$, $g$ and $k$ depend on the choice of $A$.
Following these notations, we have 
\begin{lemma}\label{lem:B}
\begin{enumerate}
	\item[(a)]$B\xi=-f\xi+\phi V$
	\item[(b)] 	$BV=-fV$
	\item[(c)] $B\phi V=(k^2+g^2)\xi+f\phi V-gV$
	\item[(d)] $B^2X=X-\la X,V\ra V$
	\item[(e)] $f^2+k^2+g^2=1$ 
	\item[(f)] $\trace B=-f$.
\end{enumerate}
\end{lemma}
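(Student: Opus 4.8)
The plan is to derive all six identities from the two structural facts about $A$: that it is a conjugation on $T_{[z]}Q^m$ (so $A^2 = \mathrm{id}$, $A$ is self-adjoint, and $AJ + JA = 0$) together with the decomposition $AN = V + fN$ with $V$ tangent to $M$, $f = \langle AN, N\rangle$, and the definitions $BX = AX - \langle V, X\rangle N$, $\xi = -JN$. The governing principle is that $B$ is the tangential truncation of $A$, so squaring, taking traces, and evaluating on $\xi$ and $V$ should all reduce to bookkeeping with $A^2 = \mathrm{id}$ and the anticommutation relation.

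First I would prove (d), since it is the most structural. Compute $B^2 X = B(AX - \langle V,X\rangle N) = A(AX) - \langle V, AX\rangle N - \langle V,X\rangle\, BN$; here $BN$ is a slight abuse, but the point is to reduce $A^2 X = X$ and note that the tangential part of $AN$ is $V$, so the $N$-components cancel against $\langle V, AX\rangle = \langle AV, X\rangle = \langle A(AN - fN), X\rangle = \langle N - fAN, X\rangle = -f\langle AN, X\rangle = -f\langle V, X\rangle$ (using $X$ tangent). Carrying this through yields $B^2 X = X - \langle X, V\rangle V$. Identity (e), $f^2 + k^2 + g^2 = 1$, then follows because $|AN|^2 = \langle A^2 N, N\rangle = 1$ and $|AN|^2 = |V|^2 + f^2 = (k^2 + \eta(V)^2) + f^2$, and one checks $\eta(V) = \langle V, \xi\rangle = g$ directly. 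For (f), $\mathrm{Trace}\, B = \mathrm{Trace}\, A|_{TM}$ computed in an orthonormal basis of $T_{[z]}Q^m$ obtained by adjoining $N$ to one of $TM$: since $A$ is a conjugation, $\mathrm{Trace}\, A = 0$ over $T_{[z]}Q^m$ (equal numbers of $\pm1$ eigenvalues because $AJ = -JA$), hence $\mathrm{Trace}\, B = -\langle AN, N\rangle = -f$.

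For (a), (b), (c) I would just evaluate $B$ on the relevant vectors using $\xi = -JN$ and $AJ = -JA$. For (b): $BV = AV - \langle V, V\rangle N = A(AN - fN) - |V|^2 N = N - fAN - |V|^2 N = N - f(V + fN) - (k^2 + g^2)N = -fV + (1 - f^2 - k^2 - g^2)N = -fV$ by (e). For (a): $A\xi = -AJN = JAN = J(V + fN) = JV - f\xi$ (since $JN = -\xi$), and $\langle V, \xi\rangle N = gN$; decomposing $JV = \phi V + \eta(JV)\xi$ with $\eta(JV) = \langle JV, -JN\rangle = -\langle V, N\rangle = 0$ wait — more carefully, $\eta(JV) = \langle JV, \xi\rangle$, and since $JV$ may have an $N$-component one writes $JV = (JV)^T + \langle JV, N\rangle N = \phi V + g N$ using $\langle JV, N\rangle = -\langle V, JN\rangle = \langle V, \xi\rangle = g$; so $A\xi = \phi V + gN - f\xi$, giving $B\xi = A\xi - gN = -f\xi + \phi V$. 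For (c), apply $B$ to $\phi V = JV - gN$ — or rather write $\phi V$ in terms of quantities we can hit with $A$ — and use $B^2$ from (d) together with (a): note $\phi V = B\xi + f\xi$, so $B\phi V = B^2\xi + fB\xi = (\xi - g V) + f(-f\xi + \phi V) = (1 - f^2)\xi - gV + f\phi V = (k^2 + g^2)\xi + f\phi V - gV$, using (e) in the last step.

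The only mild subtlety is the consistent handling of $N$-components when applying $J$ to tangent vectors of $M$ — i.e., remembering that $JV$ and $A\xi$ need not be tangent to $M$ and tracking their normal parts via $\langle JV, N\rangle = g$ — but this is routine once one fixes the conventions $\xi = -JN$, $\phi = J|_{TM} - \xi\otimes\eta$. I expect no real obstacle; the main point is to prove (d) and (e) first and then bootstrap (a)--(c) from them rather than computing each from scratch.
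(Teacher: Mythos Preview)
Your proposal is correct and follows essentially the same approach as the paper: both derive everything from $A^2=\mathrm{id}$, $AJ+JA=0$, and the tangential/normal decomposition $AX=BX+\la V,X\ra N$, $AN=V+fN$. The paper's order is (a) from $(JAN+AJN)^T=0$, then (b) and (e) as tangential and normal parts of $A^2N=N$, then (d) from $A^2X=X$, then (c) by putting $X=\xi$ in (d), and (f) from $\trace A=0$; your order differs but the content is the same, and your derivation of (c) via $\phi V=B\xi+f\xi$ is exactly the paper's ``put $X=\xi$ in (d)'' unpacked. The one presentational wrinkle is your ``$BN$'' abuse in (d): to make it clean, first observe that $BX$ is tangent (since $\la AX,N\ra=\la X,AN\ra=\la X,V\ra$), then compute $B(BX)=A(BX)-\la V,BX\ra N$ with $A(BX)=X-\la V,X\ra(V+fN)$ and $\la V,BX\ra=\la AV,X\ra=-f\la V,X\ra$, and the $N$-terms cancel.
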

\begin{proof}
It follows from $JA+AJ=0$ that $0=(JAN+AJN)^T=\phi V-f\xi-B\xi$. 
Since $A^2Z=Z$ for any vector $Z$ tangent to $Q^m$ and $\la V,V\ra=k^2+g^2$, 
the tangential and normal parts of $A^2N=N$ give (b) and (e) respectively.
For any $X$ tangent to $M$, $X=A^2X=B^2X+\la X,V\ra V$. This gives (d).
Next, with the help of (a) and (e), we can obtain (c) after putting $X=\xi$ in (d).
Finally,  (f) can be easily verified as  $\trace B=\trace A-\la AN,N\ra=-f$.
\end{proof}

For any $X$ tangent to $M$, we define 
\[\theta X:=JAX-\la X,B\xi\ra N.\]
By using the facts $JA+AJ=0$, $(JA)^2Z=Z$ for any $Z$ tangent to $Q^m$, we can also obtain the following identities
\begin{lemma}\label{lem:theta}
\begin{enumerate}
	\item[(a)]$\theta \xi=-V$
	\item[(b)] 	$\theta V=-(k^2+g^2)\xi-f\phi V$
	\item[(c)] $\theta \phi V=-f V-gB\xi$
	\item[(d)] $\theta^2X=X-\la X,B\xi\ra B\xi$
	\item[(e)] $\theta X=\phi BX-\la X,V\ra \xi=-B\phi X-\eta(X)V$
	\item[(f)] $\trace \theta=-g$.
\end{enumerate}
\end{lemma}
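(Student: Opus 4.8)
The plan is to reduce the whole lemma to a single ``master identity'' --- part~(e) --- and then read off (a)--(d) and (f) from it by substitution, together with Lemma~\ref{lem:B}. First I would use the decomposition $AX=BX+\la V,X\ra N$ from \eqref{eqn:B} to express $\theta X$ purely in terms of the induced structures. Applying $J$, and using $JN=-\xi$ with the structure equation $JY=\phi Y+\eta(Y)N$ for $Y$ tangent to $M$, gives $JAX=\phi BX-\la X,V\ra\xi+\eta(BX)N$; since $B$ is self-adjoint, $\eta(BX)=\la X,B\xi\ra$, which is exactly the normal term subtracted in the definition of $\theta$, so $\theta X=\phi BX-\la X,V\ra\xi$. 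For the second expression in~(e) I would instead push $A$ through $J$ using $JA+AJ=0$: writing $JAX=-AJX=-A\phi X-\eta(X)AN=-B\phi X-\eta(X)V+cN$ for some scalar $c$, I would check via Lemma~\ref{lem:B}(a) (i.e. $B\xi=-f\xi+\phi V$) and the skew-symmetry of $\phi$ that $c=\la X,B\xi\ra$, so that subtracting $\la X,B\xi\ra N$ leaves $\theta X=-B\phi X-\eta(X)V$.

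Granting~(e), the rest is routine. For (a) put $X=\xi$ in the second formula, using $\phi\xi=0$ and $\eta(\xi)=1$; for (b) put $X=V$ in the first formula, using $BV=-fV$ (Lemma~\ref{lem:B}(b)) and $\|V\|^2=k^2+g^2$; for (c) put $X=\phi V$ in the second formula, using $\eta(\phi V)=0$, $\phi^2V=-V+g\xi$ and again $BV=-fV$. For (d) I would compose the two forms of~(e): applying $\theta$ to $\theta X=\phi BX-\la X,V\ra\xi$ and using (a) reduces the claim to computing $\theta(\phi BX)$, and feeding $\phi BX$ into the second form of~(e) --- with $\eta(\phi BX)=0$ and $\phi^2(BX)=-BX+\la X,B\xi\ra\xi$ --- yields $\theta(\phi BX)=B^2X-\la X,B\xi\ra B\xi$; Lemma~\ref{lem:B}(d) then finishes the computation. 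For (f) I would note that Step~1 shows $\theta X$ is the tangential part of $JAX$ for $X$ tangent to $M$, hence $\trace\theta=\trace(JA|_{TQ^m})-\la JAN,N\ra$; the first term is $0$ because $\trace(JA)=\trace(-AJ)=-\trace(JA)$, and the second equals $\la V,\xi\ra=g$, giving $\trace\theta=-g$.

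I do not anticipate a real conceptual obstacle: once~(e) is in hand everything else is a mechanical substitution. The one step that demands care is the bookkeeping of normal components in the derivation of~(e) --- ensuring that when $J$ and $A$ are commuted past one another the spurious $N$-term that appears is exactly the $\la X,B\xi\ra N$ that $\theta$ already subtracts off. That is where Lemma~\ref{lem:B}(a) and the skew-symmetry of $\phi$ enter, and where the signs must be tracked carefully.
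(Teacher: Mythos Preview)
Your proposal is correct and follows essentially the same route as the paper, which merely remarks that the identities follow from $JA+AJ=0$ and $(JA)^2=\mathrm{Id}$ without giving details. Your organization around part~(e) as a master identity, deriving (d) via Lemma~\ref{lem:B}(d) rather than directly from $(JA)^2=\mathrm{Id}$, is a clean and equivalent way to package the same algebra.
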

Next, we derive some identities arisen from the tangential and normal parts of (\ref{eqn:delA}).
\begin{lemma}\label{lem:B+theta}
\begin{enumerate}
	\item[(a)] $(\nabla_XB)Y=\la Y,V\ra SX+\la SX,Y\ra V+q(X)\theta Y$
	\item[(b)] $\nabla_XV=fSX-BSX+q(X)B\xi$
	\item[(c)] $Xf=-2\la X,SV\ra+gq(X)$
	\item[(d)] $(\nabla_X\theta)Y=\la Y,B\xi\ra S X+\la SX,Y\ra B\xi-q(X)BY$
	\item[(e)] $\nabla_XB\xi=gSX-\theta SX-q(X)V$
	\item[(f)] $Xg=-2\la SB\xi,X\ra-fq(X)$
	\item[(i)] $\nabla_XV^\circ=fSX-g\phi SX-BSX+2\la SB\xi,X\ra\xi+q(X)\phi V$
	\item[(j)] $\nabla_X\phi V=gSX+f\phi SX-\phi BSX-\la SV,X\ra\xi-q(X)V^\circ$.
\end{enumerate}
\end{lemma}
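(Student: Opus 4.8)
The plan is to differentiate the three defining relations
$AN=V+fN$, $BY=AY-\la V,Y\ra N$ and $\theta Y=JAY-\la Y,B\xi\ra N$ in the direction of a vector field $X$ tangent to $M$, using the Gauss and Weingarten formulas $\hat\nabla_XY=\nabla_XY+\la SX,Y\ra N$, $\hat\nabla_XN=-SX$, the structure equation $\nabla_X\xi=\phi SX$ from (\ref{eqn:delxi}), and the covariant derivative formula (\ref{eqn:delA}) written as $\hat\nabla_X(AY)=A\hat\nabla_XY+q(X)JAY$. Throughout I will use freely that $A^2Z=Z$ and $JA+AJ=0$ on $TQ^m$, that $J$ is $\hat\nabla$-parallel, that $JW=\phi W+\eta(W)N$ for $W$ tangent to $M$ and $JN=-\xi$, and the algebraic identities of Lemmas~\ref{lem:B} and \ref{lem:theta}; in particular $B\xi=-f\xi+\phi V$, $JAN=\phi V+gN-f\xi$, $\phi B\xi=-V^\circ$ and $\theta X=-B\phi X-\eta(X)V$.

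For (a), I apply $\hat\nabla_X$ to $AY=BY+\la V,Y\ra N$. Using (\ref{eqn:delA}) together with $AN=V+fN$ and $JAY=\theta Y+\la Y,B\xi\ra N$, the left-hand side becomes $B\nabla_XY+\la SX,Y\ra V+q(X)\theta Y$ plus a normal term; by Gauss--Weingarten the right-hand side is $\nabla_X(BY)-\la V,Y\ra SX$ plus a normal term. Comparing the parts tangent to $M$ and recalling $(\nabla_XB)Y=\nabla_X(BY)-B\nabla_XY$ gives (a). Specializing to $Y=N$, i.e. differentiating $AN=V+fN$ directly and using $JAN=\phi V+gN-f\xi$, the tangential part yields (b) after rewriting $q(X)\phi V-fq(X)\xi=q(X)B\xi$, and the normal part yields (c). Identities (d)--(f) come out the same way from $\theta Y=JAY-\la Y,B\xi\ra N$: since $J$ is parallel, $\hat\nabla_X(JAY)=J\hat\nabla_X(AY)$, so expanding both sides with the formulas above and Lemma~\ref{lem:theta}(a),(e) produces (d) in the generic case, and the choice $Y=\xi$, together with $\nabla_X\xi=\phi SX$, $\la\xi,B\xi\ra=-f$ and the identity $\la SB\xi,X\ra=\la\phi V,SX\ra-f\eta(SX)$, gives (e) from the tangential part and (f) from the normal part.

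The last two identities are purely formal consequences of the preceding ones. For (i), I write $V^\circ=V-g\xi$, so $\nabla_XV^\circ=\nabla_XV-(Xg)\xi-g\phi SX$; substituting (b) for $\nabla_XV$ and (f) for $Xg$, and combining the two $q(X)$-terms via $B\xi+f\xi=\phi V$, produces the stated formula. For (j), I write $\phi V=JV-gN$, so that $\hat\nabla_X(\phi V)=J\hat\nabla_XV-(Xg)N-g\hat\nabla_XN$; taking the tangential part gives $\nabla_X(\phi V)=gSX+\phi\nabla_XV-\la SV,X\ra\xi$, into which I insert (b) and simplify using $\phi B\xi=-V^\circ$ from Lemma~\ref{lem:B}(a).

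The computation is entirely mechanical; the only thing to watch is the consistent splitting of every differentiated relation into its $TM$-part and its $\mathbb R N$-part, and the repeated substitution of $JW=\phi W+\eta(W)N$. I expect the step most prone to sign and bookkeeping errors to be the derivation of (e) and (f), where one simultaneously uses $\nabla_X\xi=\phi SX$, Lemma~\ref{lem:theta}(e) applied to the argument $SX$, and the auxiliary identity $\la SB\xi,X\ra=\la\phi V,SX\ra-f\eta(SX)$ obtained from $B\xi=-f\xi+\phi V$ and the self-adjointness of $S$; there is no genuine conceptual obstacle.
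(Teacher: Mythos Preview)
Your proposal is correct and follows essentially the same route as the paper: differentiate the defining relations for $B$, $\theta$, $V$ via $(\hat\nabla_XA)Y=q(X)JAY$ and split into tangential and normal components, then assemble (i) and (j) from (b), (f) and the structure equations. The only cosmetic differences are that the paper obtains (b) as the normal part of the $AY$-equation (rather than by differentiating $AN$ separately), derives (c) from $f=-\la B\xi,\xi\ra$ using (a), and writes (j) via $(\nabla_X\phi)V+\phi\nabla_XV$ instead of your $\phi V=JV-gN$; none of this changes the argument.
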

\begin{proof}
For any $X,Y$ tangent to $M$, we can obtain (a) and (b) from the tangential and normal parts of $(\hat\nabla_XA)Y=q(X)JAY$ respectively.
Next
\begin{align*}
Xf=&-X\la B\xi,\xi\ra=-\la\nabla_XB\xi,\xi\ra-2\la B\xi,\nabla_X\xi\ra
=-2\la X,SV\ra+gq(X).
\end{align*}
We observe that 
\[
(\hat\nabla_XJA)Y=(\hat\nabla_XJ)AY+J(\nabla_XA)Y=-q(X)AY.
\]
The tangential and normal parts give (d) and (e) respectively.
To obtain (f), we compute
\[
Xg=X\la V,\xi\ra=\la\nabla_XV,\xi\ra+\la V,\nabla_X\xi\ra=-2\la SB\xi,X\ra-fq(X).
\]
Finally, 
since $V^\circ=V-g\xi$ and $\nabla_X\phi V=(\nabla_X\phi)V+\phi\nabla_XV$,
by applying (b), (f), (\ref{eqn:delxi}), we can derive (i) and (j).
\end{proof}

\begin{lemma}\label{lem:g=0}
\begin{enumerate}
\item[(a)]
If $N$ is $\mathfrak A$-principal on a sufficiently small open set $\mathcal U\subset M$, then 
there exists a section $A$ of $\mathfrak A^0$ on $\mathcal U$ such that $f=1$.
\item[(b)] 
If  $N$ is not $\mathfrak A$-principal at $[z]$, then  there exist a sufficiently small neighborhood $\mathcal U$ of $[z]$ in $M$
and a section $A$ of $\mathfrak A^0$ on $\mathcal U$ such that $0\leq f<1$ and $g=0$.
\item[(c)] 
If  $N$ is $\mathfrak A$-isotropic on a sufficiently small open set $\mathcal U\subset M$, then 
there exists a section $A$  of $\mathfrak A^0$ on $\mathcal U$ such that $k=1$.
\end{enumerate}
\end{lemma}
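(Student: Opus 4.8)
The plan is to exploit that, over a sufficiently small open set, $\mathfrak A^0$ is a trivial $S^1$-bundle. Fix any smooth local section $A_0$ of $\mathfrak A^0$; every other section has the form $A_\vartheta=(\cos\vartheta+\sin\vartheta\,J)\circ A_0$ for a smooth function $\vartheta$ (the circle $e^{i\vartheta}\in S^1$ acting on $T_{[z]}Q^m$ through $J$), and each $A_\vartheta$ is again a conjugation in $\mathfrak A^0$ since $JA_0=-A_0J$. Writing $f_\vartheta,g_\vartheta,k_\vartheta$ for the quantities of (\ref{eqn:B}) associated with $A_\vartheta$, a short computation starting from $A_\vartheta N=(\cos\vartheta+\sin\vartheta\,J)A_0N$ and using $JN=-\xi$ and $J\xi=N$ yields
\[
f_\vartheta=\cos\vartheta\,f_0+\sin\vartheta\,g_0,\qquad
g_\vartheta=-\sin\vartheta\,f_0+\cos\vartheta\,g_0 .
\]
So $(f_\vartheta,g_\vartheta)$ is the rotation of $(f_0,g_0)$ through angle $-\vartheta$; in particular $f_\vartheta^2+g_\vartheta^2=f_0^2+g_0^2$ is independent of the section, and by Lemma~\ref{lem:B}(e) so is $k_\vartheta^2=1-f_\vartheta^2-g_\vartheta^2$. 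Hence $k$ descends to a well-defined smooth function on $M$, vanishing exactly at the $\mathfrak A$-principal points and equal to $1$ exactly at the $\mathfrak A$-isotropic ones.

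With this, (a) and (c) are immediate. For (a), the hypothesis forces $k\equiv0$, hence $f_0^2+g_0^2\equiv1$; shrinking $\mathcal U$ to a simply connected set we lift $(f_0,g_0)\colon\mathcal U\to S^1$ to a smooth $\psi$ with $(f_0,g_0)=(\cos\psi,\sin\psi)$, and then the section $A:=A_\psi$ satisfies $f=\cos^2\psi+\sin^2\psi=1$ (so also $g=0$, $V=0$). For (c), since $k$ is section-independent and equal to $1$ on $\mathcal U$, \emph{any} smooth local section $A$ of $\mathfrak A^0$ already satisfies $k=1$, and such a section exists on a small enough set because $\mathfrak A^0$ is an $S^1$-bundle over $Q^m$.

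For (b), the hypothesis says $k([z])>0$. If $k([z])=1$ then $N$ is $\mathfrak A$-isotropic at $[z]$, and on any neighborhood on which $k\equiv1$ every section has $f=g=0$, which is the claim; so we may assume $0<k([z])<1$ and, by continuity, shrink $\mathcal U$ so that $0<k<1$, i.e. $0<f_0^2+g_0^2<1$, throughout $\mathcal U$. Shrinking further to a simply connected set, lift $(f_0,g_0)/\sqrt{f_0^2+g_0^2}\colon\mathcal U\to S^1$ to a smooth $\psi$ and set $A:=A_\psi$; then $g=g_\psi=0$ and $f=f_\psi=\sqrt{f_0^2+g_0^2}\in(0,1)$, so $0\le f<1$. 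In all three parts the only delicate step is the smooth extraction of the rotation angle $\psi$, for which one must pass to a simply connected neighborhood and, in (b), keep away from the locus $k=1$ where $(f_0,g_0)$ degenerates to $0$; the rest is the routine computation above.
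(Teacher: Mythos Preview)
Your argument is sound and supplies considerably more detail than the paper, which merely says (a) follows from the definition, refers to \cite{berndt-suh0} for (b), and observes that (c) is a special case of (b). The rotation formulae $f_\vartheta=\cos\vartheta\,f_0+\sin\vartheta\,g_0$ and $g_\vartheta=-\sin\vartheta\,f_0+\cos\vartheta\,g_0$ are correct, and the section-independence of $k^2=1-f^2-g^2$ is precisely what makes (a) and (c) immediate.

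There is, however, one gap in your handling of (b). You dispose of the case $k([z])=1$ by saying ``on any neighborhood on which $k\equiv1$ every section has $f=g=0$,'' but nothing in the hypothesis guarantees such a neighborhood: it may well happen that $k([z])=1$ while $k<1$ at points arbitrarily close to $[z]$. In that situation $(f_0,g_0)$ vanishes at $[z]$, so your normalisation $(f_0,g_0)/\sqrt{f_0^2+g_0^2}$ is undefined there, and there can even be a genuine topological obstruction: if $(f_0,g_0)$ behaves like $(x,y)$ in local coordinates centred at $[z]$, the angle $\psi$ you need is the polar angle, which has nonzero winding number on small circles about $[z]$ and hence admits no continuous extension across $[z]$. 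Your case split therefore does not cover the situation where $[z]$ lies on the boundary of the $\mathfrak A$-isotropic locus. Since the paper's own uses of (b) (for instance in the proof of Theorem~\ref{thm:Eintein0}) immediately pass to an open set on which $0<k<1$, this does not affect the downstream arguments; but to prove (b) exactly as stated you would need either an additional argument for this degenerate case or a direct appeal to \cite{berndt-suh0}.
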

\begin{proof}
Statement (a) is directly from the definition while the proof of Statement (b) can be found in \cite{berndt-suh0}.
Statement (c) is just a special case of Statement (b).
\end{proof}
For each $[z]\in M$, we define a subspace $\mathcal H^\perp$ of $T_{[z]}M$ by
\[
\mathcal H^\perp:=\vspan\{\xi, V, \phi V\}.
\]
Let $\mathcal H$ be the orthogonal complement of $\mathcal H^\perp$ in $T_{[z]}M$.
Then $\dim\mathcal H=2m-2$
when $N$ is $\mathfrak A$-principal at $[z]$ and $\dim \mathcal H=2m-4$ for otherwise.
 By virtue of Lemma~\ref{lem:B}, $B\mathcal H=\mathcal H$ and 
$B_{|\mathcal H}$ has two eigenvalues $1$ and $-1$.
For each $\varepsilon\in\{1,-1\}$, denote by $\mathcal H(\varepsilon)$ the eigenspace of  $B_{|\mathcal H}$ corresponding to $\varepsilon$.
Then  $\dim\mathcal H(\varepsilon)=m-1$ (resp. $\dim \mathcal H(\varepsilon)=m-2$) 
when $N$ is $\mathfrak A$-principal (resp. $N$ is not $\mathfrak A$-principal) at $[z]$. 
Moreover, we have 
$\phi \mathcal H(\varepsilon)=\mathcal H(-\varepsilon)$ by Lemma~\ref{lem:theta}(e).

\begin{lemma} \label{lem:pri+iso}
Let $M$ be a real hypersurface in $Q^m$. Then
\begin{enumerate}
	\item[(a)] if $M$ is  $\mathfrak A$-principal, then $S\mathcal H(-1)=0$;
  \item[(b)] if $M$ is  $\mathfrak A$-isotropic, then $SV=S\phi V=0$.
\end{enumerate}
\end{lemma}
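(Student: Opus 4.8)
The plan is to extract both statements directly from the first-order structure identities in Lemma~\ref{lem:B+theta}, after normalizing the local section $A$ of $\mathfrak A^0$ appropriately; since the conclusions are pointwise, it is harmless to work on a small neighbourhood.

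For part (b), I would invoke Lemma~\ref{lem:g=0}(c) to choose a local section $A$ with $k\equiv 1$. Then Lemma~\ref{lem:B}(e) forces $f\equiv g\equiv 0$, so in particular $f$ and $g$ are constant and $B\xi=\phi V$ by Lemma~\ref{lem:B}(a). Putting $f=g=0$ into Lemma~\ref{lem:B+theta}(c) and (f), whose left-hand sides $Xf$ and $Xg$ now vanish identically, collapses these identities to $\langle SV,X\rangle=0$ and $\langle S\phi V,X\rangle=\langle SB\xi,X\rangle=0$ for every tangent vector $X$, and hence $SV=S\phi V=0$. I do not expect any obstacle here.

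For part (a), the route is a little longer. First, Lemma~\ref{lem:g=0}(a) supplies a local section $A$ with $f\equiv 1$; then $k=g=0$ by Lemma~\ref{lem:B}(e), so $V=0$, $B\xi=-\xi$, and on $TM$ the operator $B$ equals $A|_{TM}$, hence is self-adjoint, while $\mathcal H=\mathcal D$. The key step is to determine the connection $1$-form $q$: since $g\equiv 0$, Lemma~\ref{lem:B+theta}(f) with $f=1$ and $B\xi=-\xi$ reduces to $q(X)=2\langle S\xi,X\rangle=2\eta(SX)$. Feeding $f=1$, $V=0$ (so $\nabla_XV=0$) and this value of $q$ into Lemma~\ref{lem:B+theta}(b) then yields the single identity
\[
BSX=SX-2\eta(SX)\xi\qquad\text{for all }X\text{ tangent to }M.
\]
To conclude, I would pair this with an arbitrary $W\in\mathcal H(-1)$: since $BW=-W$ and $\eta(W)=0$, self-adjointness of $B$ gives $-\langle SX,W\rangle=\langle SX,W\rangle$, so $\langle SX,W\rangle=0$ for all $X$; symmetry of $S$ then forces $SW=0$, i.e.\ $S\mathcal H(-1)=0$. (Equivalently, one may split $SX$ along $\mathcal H(1)\oplus\mathcal H(-1)\oplus\mathbb R\xi$ and read off that its $\mathcal H(-1)$-part equals its own negative.)

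All the computations are short, so there is no serious obstacle; the one thing to watch is the sign bookkeeping when extracting $q$ from Lemma~\ref{lem:B+theta}(f) — equivalently, verifying that the $\xi$-components cancel when the displayed identity is tested against $\mathcal H(-1)$ — but this is routine, and everything else follows immediately from the cited lemmas.
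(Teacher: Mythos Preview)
Your proposal is correct and follows essentially the same route as the paper: both parts use Lemma~\ref{lem:g=0} to normalize $A$, then extract the conclusions from Lemma~\ref{lem:B+theta}(b),(c),(f) exactly as you describe. The only cosmetic difference in part (a) is that the paper, after obtaining $SX-BSX-2\langle X,S\xi\rangle\xi=0$, takes the transpose (using self-adjointness of $S$ and $B$) to get $SX-SBX-2\eta(X)S\xi=0$ and then substitutes $X\in\mathcal H(-1)$ directly, whereas you pair the original identity against $W\in\mathcal H(-1)$; these are the same argument.
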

\begin{proof}
Since $f=1$ when  $N$ is  $\mathfrak A$-principal everywhere, we have $k=g=0$ and $V=0$. 
It follows from Lemma~\ref{lem:B+theta}(b), (f) that 
\[
SX-BSX-2\la X,S\xi\ra \xi=0.
\]
By taking the transpose of this equation, we have 
\[
SX-SBX-2\la X,\xi\ra S\xi=0
\]
for any $X$ tangent to $M$. In particular, for $X\in\mathcal H(-1)$,  $2SX=0$ and so we obtain Statement (a).

Suppose $N$ is  $\mathfrak A$-isotropic everywhere. Then $f=g=0$ and $k=1$.
By Lemma~\ref{lem:B+theta}(c), (f) , we have $SV=0$ and $S\phi V=SB\xi=0$.
\end{proof}

It follows from (\ref{eqn:hatR}),   Lemma~\ref{lem:B} and Lemma~\ref{lem:theta} that the equations of Gauss and Codazzi equation  are  given by
\begin{align}
R(X,Y)=X\wedge Y+\phi X\wedge\phi Y-2\la\phi X,Y\ra\phi +BX\wedge BY+\theta X\wedge\theta Y+SX\wedge SY
\label{eqn:48} 
\end{align}
\begin{align}
(\nabla_X S)Y-(\nabla_YS)X=\eta(X)\phi Y-\eta(Y)\phi X-2\la\phi X,Y\ra\xi \nonumber\\
+\la X,V\ra BY-\la Y,V\ra BX+\eta(BX)\theta Y-\eta(BY)\theta X.  \label{eqn:50}
\end{align}
Let $\Ric $ be the Ricci tensor on $M$ and $h:=\trace S$. Then by (\ref{eqn:48}), we have 
\begin{align}\label{eqn:Ricci-tensor}
\Ric X=&(2m-1)X-3\eta(X)\xi+\la X,V\ra V+\la X,B\xi\ra B\xi	\nonumber\\
&-fBX -g\theta X-(S^2-h S)X.	
\end{align}


\section{Hopf  hypersurfaces in $Q^m$}
In this section, we assume that $M$ is a Hopf hypersurface in $Q^m$ with $\alpha=\la S\xi,\xi\ra$.
\begin{lemma}\label{lem:1}
Let $M$ be a Hopf hypersurface in $Q^m$. Then we have
\begin{align}\label{eqn:200}
\grad \alpha=&(\xi\alpha)\xi -2(f V^\circ+g \phi V) \\
(2S\phi S-\alpha(\phi S+S\phi)-2\phi)X=& -2\la X, V^\circ \ra\phi V+2\la X,\phi V\ra V^\circ \label{eqn:210} 
\end{align}
for any $X$ tangent to $M$.
\end{lemma}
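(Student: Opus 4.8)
The plan is to derive both identities from the Codazzi equation (\ref{eqn:50}) together with the Hopf condition $S\xi=\alpha\xi$ and the structure equations in Lemma~\ref{lem:B+theta}. First I would record the consequence of $M$ being Hopf: differentiating $S\xi=\alpha\xi$ gives $(\nabla_XS)\xi=(X\alpha)\xi+\alpha\phi SX-S\phi SX$, and feeding this into (\ref{eqn:50}) with $Y=\xi$ yields, after using $\eta(BX)=\la X,B\xi\ra$ and Lemma~\ref{lem:B}(a), an expression
\begin{align*}
(\nabla_\xi S)X = \alpha\phi SX + S\phi SX - (X\alpha)\xi + (\text{lower-order }B,\theta\text{ terms}).
\end{align*}
This is the standard ``differentiate the Hopf equation'' step; the only new feature over the $\mathbb{C}P^m$ case is the appearance of $B$ and $\theta$ terms, which I would handle using Lemma~\ref{lem:theta}(a) ($\theta\xi=-V$), Lemma~\ref{lem:B}(a), and the definition $V^\circ=V-g\xi$.

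Second, to get (\ref{eqn:200}) I would pair the resulting identity with $\xi$, i.e. compute $\la(\nabla_\xi S)X,\xi\ra$ in two ways. On one hand $\la(\nabla_\xi S)X,\xi\ra = \xi\la SX,\xi\ra - \la SX,\nabla_\xi\xi\ra = X\alpha - 0$ using $\nabla_\xi\xi=\phi S\xi=\alpha\phi\xi=0$ and that $\la SX,\xi\ra=\la X,S\xi\ra=\alpha\eta(X)$ so $\xi\la SX,\xi\ra = (\xi\alpha)\eta(X)+\alpha\la\phi SX,\xi\ra$; on the other hand I substitute the Codazzi-derived formula. The terms involving $S\phi S$ and $\alpha\phi S$ drop out against $\xi$ because $\la\phi(\cdot),\xi\ra=0$, leaving an identity relating $X\alpha$, $(\xi\alpha)\eta(X)$, and the inner products $\la X,V^\circ\ra$, $\la X,\phi V\ra$ coming from the $B$ and $\theta$ contributions; reading this as an equality of $1$-forms gives $\grad\alpha = (\xi\alpha)\xi - 2(fV^\circ + g\phi V)$. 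Here I expect to use Lemma~\ref{lem:B+theta}(c),(f) to evaluate $Xf$ and $Xg$ if they intrude, and Lemma~\ref{lem:B}(a),(b) to simplify $B\xi$, $BV$.

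Third, for (\ref{eqn:210}) I would symmetrize: take the Codazzi-derived expression for $(\nabla_\xi S)X$, apply $\phi$, and compare with the analogous expression obtained by swapping the roles appropriately, or more directly, contract (\ref{eqn:50}) with $\xi$ in the \emph{other} slot and combine. The cleanest route is to use that for a Hopf hypersurface the operator $S\phi+\phi S$ is related to $S\phi S$ via the ambient curvature: start from $\la(\nabla_XS)\xi - (\nabla_\xi S)X,\phi Y\ra$ expanded by Codazzi, then also expand $(\nabla_XS)\xi$ using $\nabla_X\xi=\phi SX$ and the Hopf equation. Equating and using Lemma~\ref{lem:theta}(a),(b),(c) together with Lemma~\ref{lem:B}(a),(b),(c) to rewrite all $B\xi$, $\theta\xi$, $BV$, $\theta V$ terms in terms of $\xi$, $V^\circ$, $\phi V$ should collapse everything to $(2S\phi S - \alpha(\phi S+S\phi) - 2\phi)X = -2\la X,V^\circ\ra\phi V + 2\la X,\phi V\ra V^\circ$, since the $f$- and $g$-weighted $B\xi$, $V$ contributions cancel in pairs.

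The main obstacle will be the bookkeeping in the third step: unlike in a complex space form, here the Codazzi equation carries the extra block $\la X,V\ra BY - \la Y,V\ra BX + \eta(BX)\theta Y - \eta(BY)\theta X$, and one must verify that after applying $\phi$ and symmetrizing, all the terms proportional to $f$ and $g$ (which appear through $B\xi=-f\xi+\phi V$, $\theta\xi=-V$, $BV=-fV$, $\theta V = -(k^2+g^2)\xi - f\phi V$) combine to exactly the clean right-hand side with no residual $\xi$-component — this uses the constraint $f^2+k^2+g^2=1$ from Lemma~\ref{lem:B}(e) in an essential way. I would organize this by first projecting onto $\mathcal{H}^\perp = \vspan\{\xi,V,\phi V\}$ and onto $\mathcal{H}$ separately, since on $\mathcal{H}$ the right-hand side of (\ref{eqn:210}) vanishes and on $\mathcal{H}^\perp$ it is a finite $3\times 3$ computation.
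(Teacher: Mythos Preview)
Your proposal is correct and follows essentially the same route as the paper: differentiate the Hopf condition to get $(\nabla_XS)\xi=(X\alpha)\xi+\alpha\phi SX-S\phi SX$, feed this into the Codazzi equation (\ref{eqn:50}) contracted with $\xi$, and set $Y=\xi$ to read off (\ref{eqn:200}). For (\ref{eqn:210}) the paper is slicker than your third step: rather than splitting into $\mathcal H$ and $\mathcal H^\perp$ and invoking $f^2+k^2+g^2=1$, it simply substitutes the already-proved (\ref{eqn:200}) back into the general two-variable identity $\la(\nabla_XS)Y-(\nabla_YS)X,\xi\ra=\cdots$, whereupon the $f$- and $g$-weighted $\eta$-terms cancel directly and (\ref{eqn:210}) drops out---no case analysis or use of Lemma~\ref{lem:B}(e) is needed.
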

\begin{remark}
This lemma can be obtained by a standard calculation using the Codazzi equation and has been proved in 
\cite{berndt-suh2}.
We will just outline the proof as below. 
\end{remark}
\begin{proof}[Proof of Lemma~\ref{lem:1}]
For any $X,Y$ tangent to $M$, we have 
\[
(\nabla_XS)\xi=(X\alpha)\xi+\alpha\phi SX-S\phi SX.
\]
By the Codazzi equation and this equation, we obtain
\begin{align*}
0
=&\la(\nabla_XS)Y-\nabla_YS)X,\xi\ra+2\la \phi X,Y\ra-2\la X,V\ra\la Y,B\xi\ra+2\la Y,V\ra\la X,B\xi\ra\\
=&(X\xi)\eta(Y)-(Y\xi)\eta(X)-2\la X,V\ra\la Y,B\xi\ra+2\la Y,V\ra\la X,B\xi\ra\\
&+\la \big(2\phi+\alpha(\phi S+S\phi)-2S\phi S\big) X,Y\ra.
\end{align*}
By substituting $Y=\xi$, we obtain (\ref{eqn:200}). By using (\ref{eqn:200}) and the above equation, we can get (\ref{eqn:210}). 
\end{proof}

By acting $\phi$ on both sides of (\ref{eqn:210}), we obtain
\begin{align*}
(2\phi S\phi S+\alpha S-\alpha\phi S\phi +2)X-(\alpha^2+2)\eta(X)\xi=2\la X,V^\circ\ra V^\circ+2\la X,\phi V\ra\phi V.
\end{align*} 
This  implies that $(\phi S\phi) S-S(\phi S\phi)=0$.
Hence there exists a local orthonormal frame 
$\{X_0 =\xi,X_1, . . . , X_{m-1}, X_{m}=\phi X_1,\cdots, X_{2m-2}=\phi X_{m-1}\}$
 such that 
\begin{align}\label{eqn:800}
SX_j =\lambda_jX_j, \quad \phi S\phi X_j = -\mu_jX_j;  \quad j\in\{1,\cdots,m-1\}.
\end{align}
By using (\ref{eqn:210}) and (\ref{eqn:800}), we get  
\begin{align}\label{eqn:810}
\{-2\lambda_j\mu_j+\alpha(\lambda_j+\mu_j)+2\}X_j=2\la X_j, V^\circ\ra V^\circ+2\la X_j,\phi V\ra\phi V
\end{align} 
for $j\in\{1,\cdots,m-1\}$.
If  $N$ is $\mathfrak A$-principal, then $\mathcal D=\mathcal H$ and hence $S\mathcal H\subset\mathcal H$.
On the other hand,  $k>0$ or $V^\circ\neq0$ when $N$ is not $\mathfrak A$-principal. 
Hence, there is exactly one $j$'s, say $j=1$, such that $-2\lambda_1\mu_1+\alpha(\lambda_1+\mu_1)+2\neq0$.
This means that $\mathcal H$ is spanned by the vectors $X_2,\cdots,X_{m-1},\phi X_2,\cdots,\phi X_{m-1}$;
so $S\mathcal H\subset\mathcal H$ and 
\begin{align*}
-2\lambda_j\mu_j+\alpha(\lambda_j+\mu_j)+2=0, \quad j\in\{2,\cdots,m-1\}
\end{align*} 
in this case.
Furthermore, by selecting an appropriate local section $A$, we can set $X_1=(1/k)V^\circ$ and 
\begin{align*}
-2\lambda_1\mu_1+\alpha(\lambda_1+\mu_1)+2-2k^2=0.
\end{align*} 

We have shown the following lemma.
\begin{lemma}\label{lem:7b}
Let $M$ be a Hopf hypersurface in $Q^m$, $m\geq 3$. Then 
 $S\mathcal H\subset\mathcal H$. 
 If  $E$ is a vector tangent to $\mathcal H$ such that $SE=\lambda E$ and $S\phi E=\mu\phi E$, then 
										\begin{align*}
-2\lambda\mu+\alpha(\lambda+\mu)+2=0.
\end{align*} 
Furthermore, if $N$ is not  $\mathfrak A$-principal, then there exists a local section $A$ of $\mathfrak A^0$ such that 
$SV^\circ =tV^\circ$ and  $S\phi V=\omega\phi V$, where $t$ and $\omega$ satisfy 
 \begin{align*}
-2t\omega+\alpha(t+\omega)+2-2k^2=0.
\end{align*} 
\end{lemma}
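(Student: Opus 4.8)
The plan is to push the identity (\ref{eqn:210}) of Lemma~\ref{lem:1} as far as it goes, using only that $M$ is Hopf. First I would act by $\phi$ on (\ref{eqn:210}); since $\phi\xi=0$, $\eta(SX)=\alpha\eta(X)$ and $\phi^2X=-X+\eta(X)\xi$, this gives
\[
(2\phi S\phi S+\alpha S-\alpha\phi S\phi+2)X-(\alpha^2+2)\eta(X)\xi=2\la X,V^\circ\ra V^\circ+2\la X,\phi V\ra\phi V .
\]
Pairing with a second vector $Y$: the right-hand side, and also $(\alpha S-\alpha\phi S\phi+2)X$, is symmetric under $X\leftrightarrow Y$ (both $S$ and $\phi S\phi$ being self-adjoint), so $2\phi S\phi S$ is self-adjoint as well, i.e.\ $\phi S\phi S=S\phi S\phi$. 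Since $M$ is Hopf, $S$ leaves $\mathcal D$ and $\xi$ invariant, so $S$ and $\phi S\phi$ are simultaneously diagonalisable on $\mathcal D$, producing the adapted frame (\ref{eqn:800}) with $SX_j=\lambda_jX_j$, $S\phi X_j=\mu_jX_j$.

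Feeding (\ref{eqn:800}) into (\ref{eqn:210}) gives (\ref{eqn:810}), and the same computation with $\phi X_j$ in place of $X_j$ gives an identity with the \emph{same} coefficient $c_j:=-2\lambda_j\mu_j+\alpha(\lambda_j+\mu_j)+2$; in every case the right-hand side lies in $\mathcal W:=\vspan\{V^\circ,\phi V\}$, which has dimension at most two. If $N$ is $\mathfrak A$-principal, I would take a section with $f=1$ by Lemma~\ref{lem:g=0}(a); then $V^\circ=\phi V=0$, so (\ref{eqn:810}) forces $c_j=0$ for all $j$, while $\mathcal H=\mathcal D$ and hence $S\mathcal H=S\mathcal D\subset\mathcal D=\mathcal H$ by the Hopf condition, the relation $-2\lambda\mu+\alpha(\lambda+\mu)+2=0$ holding on all of $\mathcal H$.

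If $N$ is not $\mathfrak A$-principal, then $k>0$, and $V^\circ,\phi V$ being orthogonal of common length $k$, $\dim\mathcal W=2$. By (\ref{eqn:810}), any $X_j$, and any $\phi X_j$, with $c_j\ne0$ lies in $\mathcal W$; two distinct such indices would crowd four mutually orthogonal unit vectors into the plane $\mathcal W$, which is impossible, and if $c_j=0$ for all $j$ then $V^\circ\perp\mathcal D$, forcing $V^\circ=0$ and contradicting $k>0$. So exactly one index, say $j=1$, has $c_1\ne0$; then $\mathcal W=\vspan\{X_1,\phi X_1\}$, $\mathcal H=\vspan\{X_2,\dots,X_{m-1},\phi X_2,\dots,\phi X_{m-1}\}$ is $S$-invariant, and $c_j=0$ for $j\ge2$ is the asserted relation on $\mathcal H$. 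I expect this bookkeeping --- that exactly one eigendirection of $S|_{\mathcal D}$ leaks into $\mathcal W$, simultaneously delivering $S\mathcal H\subset\mathcal H$ and the eigenvalue identity --- to be the step requiring the most care.

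For the last assertion, $\mathcal W=\mathcal D\ominus\mathcal H$ is an $S$-invariant plane on which $\phi$ acts as a rotation through a right angle, and on such a plane $S\phi S=(\det S|_{\mathcal W})\,\phi$ and $\phi S+S\phi=(\trace S|_{\mathcal W})\,\phi$; moreover the right-hand side of (\ref{eqn:210}) restricted to $\mathcal W$ equals $-2k^2\phi X$. Hence (\ref{eqn:210}) collapses on $\mathcal W$ to the scalar identity $2\det S|_{\mathcal W}-\alpha\,\trace S|_{\mathcal W}+2k^2-2=0$, that is $-2t\omega+\alpha(t+\omega)+2-2k^2=0$ for the eigenvalues $t,\omega$ of $S|_{\mathcal W}$. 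Finally, replacing the section $A$ by $e^{\sqrt{-1}\,\vartheta}A$ rotates $V^\circ$ within $\mathcal W$ through the angle $\vartheta$, so I may choose $\vartheta$ so that $V^\circ/k$ becomes a unit eigenvector of $S|_{\mathcal W}$; then $SV^\circ=tV^\circ$ and, $\phi V=\phi V^\circ$ being the orthogonal eigendirection, $S\phi V=\omega\phi V$, which completes the proof.
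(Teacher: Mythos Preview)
Your argument is correct and follows essentially the same route as the paper: act by $\phi$ on (\ref{eqn:210}) to get $[\phi S\phi,S]=0$, simultaneously diagonalise on $\mathcal D$, and use (\ref{eqn:810}) to see that at most one eigenplane can fail $c_j=0$, which yields $S\mathcal H\subset\mathcal H$ and the eigenvalue relation. Your handling of the final clause is slightly more explicit than the paper's (you first extract the trace/determinant identity on $\mathcal W$ and then rotate the section $A$ to align $V^\circ$ with an eigendirection, whereas the paper simply asserts the section can be so chosen and reads off the relation from the $j=1$ case of (\ref{eqn:810})); note the harmless typo $S\phi X_j=\mu_jX_j$, which should read $S\phi X_j=\mu_j\phi X_j$.
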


\begin{lemma}\label{lem:contact}
Let $M$ be a real hypersurface in $Q^m$, $m\geq3$. Then $\phi S+S\phi\neq0$ on every open set $\mathcal U\subset M$. 
\end{lemma}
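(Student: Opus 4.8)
The plan is to argue by contradiction: suppose $\phi S + S\phi = 0$ on some open set $\mathcal U$. This is a very strong restriction on the shape operator, and I would first extract its algebraic consequences. Applying this identity to $\xi$ gives $\phi S\xi = 0$, hence $S\xi = \alpha\xi$ for $\alpha := \langle S\xi,\xi\rangle$; so $M$ is automatically Hopf on $\mathcal U$. Next, if $SX = \lambda X$ for $X \in \mathcal D = \ker\eta$, then $0 = \phi SX + S\phi X = \lambda\phi X + S\phi X$ forces $S\phi X = -\lambda\phi X$. Thus the principal curvatures on $\mathcal D$ come in pairs $\{\lambda, -\lambda\}$ with eigenvectors $\{X, \phi X\}$; in the notation of~\eqref{eqn:800} this means $\mu_j = -\lambda_j$ for all $j$ (since $\phi S\phi X_j = -S\phi^2 X_j = SX_j$... wait, let me recompute: $\phi S\phi X_j = -\phi^2 S X_j = SX_j = \lambda_j X_j$, so actually $\mu_j = -\lambda_j$).

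With $\mu_j = -\lambda_j$, I would feed this into the Hopf identities of Section~4. From Lemma~\ref{lem:7b}, for $E$ tangent to $\mathcal H$ with $SE = \lambda E$ we get $-2\lambda\mu + \alpha(\lambda+\mu) + 2 = 0$, which with $\mu = -\lambda$ becomes $2\lambda^2 + 2 = 0$, i.e. $\lambda^2 = -1$ — an immediate contradiction, \emph{provided} $\mathcal H \neq 0$, i.e. provided $m \geq 3$. Indeed $\dim\mathcal H \geq 2m-4 \geq 2$ when $m \geq 3$, so $\mathcal H$ always contains such a principal vector $E$. This already disposes of the case where $N$ is $\mathfrak A$-principal (then $\mathcal D = \mathcal H$) and the "generic" part of $\mathcal D$ otherwise; one must still handle the exceptional two-dimensional piece $\vspan\{V^\circ, \phi V\}$ when $N$ is not $\mathfrak A$-principal. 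There the last part of Lemma~\ref{lem:7b} gives $-2t\omega + \alpha(t+\omega) + 2 - 2k^2 = 0$ with $SV^\circ = tV^\circ$, $S\phi V = \omega\phi V$; but $\phi S + S\phi = 0$ applied to $V^\circ$ (using $\phi V^\circ = \phi V$ since $V^\circ = V - g\xi$) gives $S\phi V = -\phi S V^\circ = -t\phi V$, so $\omega = -t$, and the relation becomes $2t^2 + 2 - 2k^2 = 0$, i.e. $t^2 = k^2 - 1 \leq 0$ by Lemma~\ref{lem:B}(e), forcing $t = 0$ and $k = 1$; but $k=1$ means $f = g = 0$, i.e. $N$ is $\mathfrak A$-isotropic — then by Lemma~\ref{lem:pri+iso}(b) we have $SV = S\phi V = 0$, which is consistent, so this sub-case is not killed by Section~4 alone and needs a separate argument.

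To close the remaining $\mathfrak A$-isotropic sub-case, I would use~\eqref{eqn:210} directly: with $\phi S + S\phi = 0$ it reads $2S\phi S X - 2\phi X = -2\langle X,V^\circ\rangle\phi V + 2\langle X,\phi V\rangle V^\circ$, and $S\phi S = -S^2\phi$ (using the anticommutation), so $-(S^2\phi + \phi)X = -\langle X,V^\circ\rangle\phi V + \langle X,\phi V\rangle V^\circ$. Restricting to $X \in \mathcal H$ (where the right side vanishes and $S$ preserves $\mathcal H$ by Lemma~\ref{lem:7b}) gives $S^2 X = -X$ on $\mathcal H$, again impossible for $m \geq 3$ since $\mathcal H \neq 0$ and $S^2$ is positive semidefinite. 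Alternatively, and perhaps more cleanly, one observes that the first displayed computation ($\lambda^2 = -1$ on $\mathcal H$) never used whether $N$ is $\mathfrak A$-principal or isotropic — it only used $\phi S + S\phi = 0$, that $M$ is then Hopf, and Lemma~\ref{lem:7b}'s relation on $\mathcal H$ — so in fact the single observation "$\mathcal H \neq 0$ and $S|_{\mathcal H}$ has a real eigenvalue $\lambda$ with $\lambda^2 = -1$" already yields the contradiction in all cases. The main obstacle is therefore just the bookkeeping: confirming that $\phi S + S\phi = 0$ genuinely implies $M$ is Hopf (so that Section~4 applies) and that the paired-eigenvalue structure is correctly transported into the normalization~\eqref{eqn:800}; once that is pinned down, the contradiction $\lambda^2 = -1$ on the nonempty space $\mathcal H$ is automatic for $m \geq 3$.
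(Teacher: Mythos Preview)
Your proposal is correct and takes essentially the same approach as the paper: assume $\phi S+S\phi=0$ on an open set, observe this forces $M$ to be Hopf there, then apply the relation $-2\lambda\mu+\alpha(\lambda+\mu)+2=0$ of Lemma~\ref{lem:7b} to a principal vector in $\mathcal H$ (nonempty since $m\geq3$) with $\mu=-\lambda$ to reach $2\lambda^2+2=0$. Your detour through the $\mathfrak A$-isotropic sub-case is unnecessary, as you yourself note at the end; the paper's proof is precisely your ``alternatively, and perhaps more cleanly'' version.
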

\begin{proof}
Suppose $\phi S+S\phi=0$ on $\mathcal U$.  
It is clear that $\xi$ is principal  at each $[z]\in \mathcal U$.
Since $\dim \mathcal H\geq 2m-4>0$, we take a principal vector $X\in\mathcal H$ in line with Lemma~\ref{lem:7b}.
It follows that $\lambda+\mu=0$ and so $2\lambda^2+2=0$.
This is a contradiction and we obtain the Lemma.  
\end{proof}

\begin{lemma}\label{lem:7}
Let $M$ be a Hopf hypersurface in $Q^m$. Then $\alpha$ is constant if and only if either
$M$ is $\mathfrak A$-principal or $M$ is $\mathfrak A$-isotropic.
\end{lemma}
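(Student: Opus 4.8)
The plan is to prove the two implications separately, the substantial one being that constancy of $\alpha$ forces $M$ to be $\mathfrak A$-principal or $\mathfrak A$-isotropic; the reverse implication follows quickly from (\ref{eqn:200}) and Lemma~\ref{lem:contact}.

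For the reverse implication, suppose first that $M$ is $\mathfrak A$-principal. Choosing, by Lemma~\ref{lem:g=0}(a), a local section $A$ of $\mathfrak A^0$ with $f\equiv1$ forces $g=k=0$ and $V=V^\circ=\phi V=0$, so (\ref{eqn:200}) reduces to $\grad\alpha=(\xi\alpha)\xi$. If instead $M$ is $\mathfrak A$-isotropic, Lemma~\ref{lem:g=0}(c) gives a local section with $k\equiv1$, hence $f=g=0$, and (\ref{eqn:200}) again gives $\grad\alpha=(\xi\alpha)\xi$. Writing $\beta:=\xi\alpha$, I would then use the symmetry of the Hessian of $\alpha$: since $\nabla_X\xi=\phi SX$, differentiating $\grad\alpha=\beta\xi$ and symmetrising in $X,Y$ yields
\[
(X\beta)\eta(Y)+\beta\la\phi SX,Y\ra=(Y\beta)\eta(X)+\beta\la\phi SY,X\ra .
\]
Restricting $X,Y$ to $\mathcal D$ and using $\la\phi SY,X\ra=-\la Y,S\phi X\ra$, this collapses to $\beta\la(\phi S+S\phi)X,Y\ra=0$; together with $(\phi S+S\phi)\xi=0$ and $\la(\phi S+S\phi)\mathcal D,\xi\ra=0$ it shows that the symmetric operator $\beta(\phi S+S\phi)$ vanishes identically. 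On the open set where $\beta\neq0$ this would give $\phi S+S\phi=0$, contradicting Lemma~\ref{lem:contact}; hence $\beta\equiv0$, so $\grad\alpha=0$ and $\alpha$ is constant.

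For the forward implication, assume $\alpha$ is constant, so that $\grad\alpha=0$ and $\xi\alpha=0$. Fix a point $p$ at which $N$ is \emph{not} $\mathfrak A$-principal (if there is none, $M$ is already $\mathfrak A$-principal). By Lemma~\ref{lem:g=0}(b) there is a neighbourhood $\mathcal U$ of $p$ and a section $A$ of $\mathfrak A^0$ on $\mathcal U$ with $0\le f<1$ and $g=0$; then (\ref{eqn:200}) forces $fV^\circ=0$ on $\mathcal U$. Since $V^\circ$ has length $k$ and $k=0$ together with $g=0$ would force $f^2=1$ by Lemma~\ref{lem:B}(e), which is excluded, $k$ never vanishes on $\mathcal U$, so $f\equiv0$ there, and therefore $f=g=0$, $k=1$ on all of $\mathcal U$, i.e.\ $N$ is $\mathfrak A$-isotropic on $\mathcal U$. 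Hence the open locus $W$ where $N$ is not $\mathfrak A$-principal coincides with the locus $U_2$ where $N$ is $\mathfrak A$-isotropic, so $U_2$ is open. To globalise, I would use the continuous function $\rho:=\la AN,N\ra^2+\la AN,JN\ra^2=f^2+g^2$ on $M$, which (as is readily checked) is independent of the choice of local section and equals $1$ exactly at $\mathfrak A$-principal points and $0$ exactly at $\mathfrak A$-isotropic points. By the above $\rho$ takes no value in $(0,1)$, so $U_1=\rho^{-1}(1)$ and $U_2=\rho^{-1}(0)$ are both open and closed; since $M$ is connected, $M=U_1$ or $M=U_2$, i.e.\ $M$ is $\mathfrak A$-principal or $\mathfrak A$-isotropic.

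The step requiring the most care is the local analysis in the forward direction: one must squeeze from the single scalar identity $fV^\circ=0$ and the normalisation $0\le f<1$ the conclusion that one is in the $\mathfrak A$-isotropic regime on an \emph{entire} neighbourhood rather than merely pointwise, after which the clopen-dichotomy argument is routine. The Hessian-symmetry trick in the reverse direction, though short, is the other place where one must argue rather than merely compute, and its whole force rests on Lemma~\ref{lem:contact}.
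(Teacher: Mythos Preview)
Your proof is correct and follows essentially the same approach as the paper: both directions use the same key ingredients (equation~(\ref{eqn:200}), Lemma~\ref{lem:g=0}, Lemma~\ref{lem:contact}, and the continuous invariant $f^2+g^2$), and the Hessian-symmetry argument you give for the reverse implication is exactly the paper's computation, only organised slightly differently (you restrict to $\mathcal D$ first, the paper substitutes $Y=\xi$ first). Your forward-direction analysis is in fact a bit more carefully worded than the paper's in justifying why $k>0$ on the non-principal locus, but the underlying clopen dichotomy via $f^2+g^2$ is identical.
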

\begin{proof}
Suppose $\alpha$ is a constant. 
Then by (\ref{eqn:200}), we have $fV^\circ+g\phi V=0$ and so $fk=gk=0$.
Let 
\begin{align*}
M_1=\{[z]\in M : N_{[z]} \text{ is  $\mathfrak A$-principal}\}.
\end{align*}
If $N$ is not $\mathfrak A$-principal everywhere, it follows from Lemma~\ref{lem:g=0} that $k\neq0$ on 
$M_1^c$, which implies that 
$f=g=0$ on $M_1^c$
and hence $N$ is $\mathfrak A$-isotropic on $M_1^c$.

Now consider the function $F:=f^2+g^2$. We note that $F$ is independent of the choice of $A\in\mathfrak A^0$ and globally defined on $M$. 
Then $F=1$ on $M_1$  and $F=0$ on $M_1^c$.
By the continuity of $F$, $M=M_1^c$ and so it is $\mathfrak A$-isotropic.   

Conversely, we have two cases: $M$ is $\mathfrak A$-principal and $M$ is $\mathfrak A$-isotropic.
If  $M$ is $\mathfrak A$-principal, then $f=1$,  $g=0$ and  $V=0$.
On the other hand, we have $f=g=0$ when $M$ is $\mathfrak A$-isotropic.
By using (\ref{eqn:200}), we deduce that 
$\grad\alpha=(\xi\alpha)\xi$ in both cases.
It follows that 
\[
(XY-\nabla_XY)\alpha=(X\xi\alpha)\eta(Y)+(\xi\alpha)\la Y,\phi SX\ra.
\]
Hence
\[
0=(X\xi\alpha)\eta(Y)-(Y\xi\alpha)\eta(X)+(\xi\alpha)\la Y,(\phi S+S\phi)X\ra.
\]
Substituting $Y=\xi$ gives $X\xi\alpha=(\xi\xi\alpha)\eta(X)$. Hence $(\xi\alpha)(\phi S+S\phi)=0$.
It follows from Lemma~\ref{lem:contact} that $ \phi S+S\phi\neq0$ on a dense open subset of $M$. 
Hence $\xi\alpha=0$  by its continuity and so $\grad \alpha=0$.
Accordingly, $\alpha$ is a constant.
\end{proof}

\begin{lemma}
Assuming the notation and hypotheses in Lemma~\ref{lem:7b}, 
if $M$ is neither $\mathfrak A$-principal nor $M$ is $\mathfrak A$-isotropic, then
\begin{align}
(\xi\alpha)(\lambda+\mu)=&-2g(\lambda-\mu)\la BE,E\ra-2f(\lambda-\mu)\la BE,\phi E\ra \label{eqn:a-40}\\
(\xi\alpha)(t+\omega)=&2fg(t-\omega) \label{eqn:a-50} \\
\grad (\xi\alpha)=& (\xi\alpha)\xi+2f(\omega-\alpha)\phi V-2g(t-\alpha)V^\circ. \label{eqn:a-20}
\end{align} 
\end{lemma}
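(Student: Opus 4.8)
The plan is to differentiate the scalar relations already available and feed them into the structure equations. The three displayed identities are all consequences of $\xi\alpha$ being the only nontrivial component of $\grad\alpha$ (by \eqref{eqn:200}, since $fV^\circ+g\phi V$ is the $\mathcal D$-part), together with the eigenvalue equations from Lemma~\ref{lem:7b}. First I would establish \eqref{eqn:a-40}: take a unit eigenvector $E\in\mathcal H$ with $SE=\lambda E$, $S\phi E=\mu\phi E$, and differentiate the relation $-2\lambda\mu+\alpha(\lambda+\mu)+2=0$ in the direction $\xi$. The term $(\xi\alpha)(\lambda+\mu)$ appears immediately; the remaining terms involve $\xi\lambda$ and $\xi\mu$, which I eliminate by using the Codazzi equation \eqref{eqn:50} applied to the pair $(X,E)$ and to $(X,\phi E)$ with $X=\xi$, extracting $\la(\nabla_\xi S)E,E\ra$ and $\la(\nabla_\xi S)\phi E,\phi E\ra$. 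Since $M$ is Hopf, $\nabla_\xi\xi=0$ and $(\nabla_\xi S)\xi=(\xi\alpha)\xi$ (the $\alpha\phi S\xi - S\phi S\xi$ terms drop), so the Codazzi right-hand side contributes only through the $BE$, $\theta E$ blocks, producing the $\la BE,E\ra$ and $\la BE,\phi E\ra$ terms (using $\theta E=\phi BE$ on $\mathcal H$ from Lemma~\ref{lem:theta}(e), together with $\la BE,\phi E\ra = -\la B\phi E,E\ra$). The same differentiation applied instead to the $V^\circ,\phi V$ eigenvalue relation $-2t\omega+\alpha(t+\omega)+2-2k^2=0$, now also using $\xi k$ (computed from Lemma~\ref{lem:B+theta}(c),(f) and $k^2=1-f^2-g^2$, so $k\,\xi k = -f\xi f - g\xi g = 2f\la\xi,SV\ra+2g\la SB\xi,\xi\ra$ which simplifies since $M$ is Hopf), yields \eqref{eqn:a-50}.

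For \eqref{eqn:a-20} I would compute $\grad(\xi\alpha)$ directly. Write $\beta:=\xi\alpha$; from \eqref{eqn:200}, $\grad\alpha=\beta\xi-2(fV^\circ+g\phi V)$, and $\beta=\la\grad\alpha,\xi\ra$. Then $X\beta = X\la\grad\alpha,\xi\ra = \la\nabla_X\grad\alpha,\xi\ra + \la\grad\alpha,\nabla_X\xi\ra$. For the first term I use that $\nabla\grad\alpha$ is symmetric (it is the Hessian), so $\la\nabla_X\grad\alpha,\xi\ra = \la\nabla_\xi\grad\alpha,X\ra$, and differentiate the expression $\beta\xi-2(fV^\circ+g\phi V)$ along $\xi$, using Lemma~\ref{lem:B+theta}(i),(j) for $\nabla_\xi V^\circ$ and $\nabla_\xi\phi V$, the formulas Lemma~\ref{lem:B+theta}(c),(f) for $\xi f$ and $\xi g$, and $\nabla_\xi\xi=\phi S\xi=0$. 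For the second term $\la\grad\alpha,\phi SX\ra = -2\la fV^\circ+g\phi V,\phi SX\ra$ since $\la\xi,\phi SX\ra=0$. Collecting everything, the tangential components of $SV^\circ$ and $S\phi V$ are replaced via Lemma~\ref{lem:7b} by $tV^\circ$ and $\omega\phi V$, and the $\alpha$-shifts $(\omega-\alpha)$, $(t-\alpha)$ emerge from pairing the $\phi S\xi$-type and $S$-type contributions. The $q$-form terms should cancel or combine cleanly because $\grad(\xi\alpha)$ is $A$-independent (it is built from the globally defined $\alpha$), which is a useful consistency check on the bookkeeping.

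The main obstacle I anticipate is the bookkeeping in \eqref{eqn:a-20}: keeping track of which frame ($V^\circ$ versus the normalized $X_1$) is in force, handling the connection $1$-form $q$ so that it genuinely disappears from the final answer, and correctly using Lemma~\ref{lem:g=0}(b) so that $g=0$ can be assumed in a neighborhood — which drastically simplifies $\xi f$, $\xi g$ and the $\nabla_\xi V^\circ$, $\nabla_\xi\phi V$ expressions — while still writing the final identities in the $A$-covariant form displayed (valid for any admissible local section). I would carry out the computation first under the normalization $g=0$ from Lemma~\ref{lem:g=0}(b), verify the three identities there, and then note that both sides transform correctly under the residual $S^1$-gauge freedom $A\mapsto \lambda A$, so the identities hold for the section used in Lemma~\ref{lem:7b}. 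A secondary subtlety is that \eqref{eqn:a-40} is asserted for \emph{every} eigenvector $E\in\mathcal H$ while \eqref{eqn:a-50} uses the specific section adapted to $V^\circ$; I would make sure the derivation of \eqref{eqn:a-40} never invokes that special choice, using only $S\mathcal H\subset\mathcal H$ and $\phi\mathcal H(\varepsilon)=\mathcal H(-\varepsilon)$.
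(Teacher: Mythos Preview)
Your approach is workable but differs from the paper's in an instructive way. The paper obtains all three identities from a \emph{single} master equation: starting from \eqref{eqn:200}, it expands $(XY-\nabla_XY)\alpha$ and antisymmetrizes in $X,Y$ using the symmetry of the Hessian of $\alpha$. The result has the form
\[
0=\{X(\xi\alpha)+\cdots\}\eta(Y)-\{Y(\xi\alpha)+\cdots\}\eta(X)
+\la(\xi\alpha)(\phi S+S\phi)X+2g(\phi BS+SB\phi)X+2f(SB-BS)X,\,Y\ra,
\]
and \eqref{eqn:a-40}, \eqref{eqn:a-50}, \eqref{eqn:a-20} drop out by substituting $(X,Y)=(E,\phi E)$, $(X,Y)=(V^\circ,\phi V)$, and $X=\xi$ respectively. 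Your route for \eqref{eqn:a-20} is exactly this Hessian argument, but for \eqref{eqn:a-40} and \eqref{eqn:a-50} you instead differentiate the scalar relations of Lemma~\ref{lem:7b} along $\xi$ and recover $\xi\lambda,\xi\mu,\xi t,\xi\omega$ from Codazzi. That does work---for $E\in\mathcal H$ one finds $\xi\lambda=g\la BE,E\ra+f\la BE,\phi E\ra=-\xi\mu$ and \eqref{eqn:a-40} follows---but it handles the three identities by separate mechanisms and needs the eigenvalue functions to be differentiable, whereas the paper's uniform Hessian computation avoids that and is shorter.

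Your proposed $g=0$ normalization is an unnecessary detour and is in tension with the hypotheses. The section of $\mathfrak A^0$ fixed in Lemma~\ref{lem:7b} is the one for which $V^\circ$ is an $S$-eigenvector; under the gauge change $A\mapsto(\cos\theta+\sin\theta\,J)A$ the pair $(V^\circ,\phi V)$ rotates in its own plane while $(f,g)$ rotates independently, so the $g=0$ section generically differs from the Lemma~\ref{lem:7b} section---and in the $g=0$ gauge $V^\circ$ need not be principal, so $t,\omega$ lose their meaning there. The paper simply runs the Hessian computation with general $f,g$ using Lemma~\ref{lem:B+theta}(i),(j); the $q$-terms cancel automatically in the antisymmetrization and no gauge-covariance argument is needed.
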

\begin{proof}
By using (\ref{eqn:200}), we have 
\begin{align*}
(XY-\nabla_XY)\alpha
=&(X\xi\alpha)\eta(Y)-2(Xf)\la V^\circ,Y\ra-2(Xg)\la \phi V,Y\ra\\
&+(\xi\alpha)\la\nabla_X\xi,Y\ra-2f\la\nabla_XV^\circ,Y\ra-2g\la\nabla_X\phi V,Y\ra.
\end{align*}
It follows that 
\begin{align*}
0=&(X\xi\alpha)\eta(Y)-2(Xf)\la V^\circ,Y\ra-2(Xg)\la \phi V,Y\ra    \notag  \\
     &-(Y\xi\alpha)\eta(X)+2(Yf)\la V^\circ,X\ra+2(Yg)\la \phi V,X\ra \notag\\
 &+(\xi\alpha)\la\nabla_X\xi,Y\ra-2f\la\nabla_XV^\circ,Y\ra-2g\la\nabla_X\phi V,Y\ra \notag\\
& -(\xi\alpha)\la\nabla_ Y\xi,X\ra+2f\la\nabla_YV^\circ,X\ra+2g\la\nabla_Y\phi V,X\ra \notag\\
=&\{X(\xi\alpha)+2g(t-2\alpha)\la V^\circ,X\ra-4f(\omega-\alpha)\la\phi  V,X\ra\}\eta(Y)									\notag\\
& -\{Y(\xi\alpha)+2g(t-2\alpha)\la V^\circ,Y\ra-4f(\omega-\alpha)\la\phi  V,Y\ra\}\eta(X)									\notag\\
&+\la (\xi\alpha)(\phi S+S\phi)X+2g(\phi BS+SB\phi)X+2f\la(SB-BS)X,Y\ra.			
\end{align*}
for any $X,Y\in TM$. 
In particular, if $X=E$ and $Y=\phi E$, then 
we get  (\ref{eqn:a-40}).
On the other hand,  (\ref{eqn:a-50}) can be obtained by putting $X=V^\circ$ and $Y=\phi V$ in preceding equation.
Finally, letting $X=\xi$, gives  (\ref{eqn:a-20}).
\end{proof}


\section{Tubes around $Q^{m-1}$ in $Q^m$}
\label{sec:tube}
The totally geodesic complex hypersurface $Q^{m-1}$ in $Q^m$ is determined by the equations
\[
z_0^2+\cdots+z_m^2=0, \quad z_{m+1}=0.
\]
$Q^{m-1}$ is a singular orbit of the cohomogeneity one action $SO_{m+1}\subset SO_{m+2}$ on $Q^m$.
The other singular orbit is a totally geodesic totally real $m$-dimensional sphere $S^m=SO_{m+1}/SO_m$.

The distance between the two singular orbits of the $SO_{m+1}$-action is $\pi/2\sqrt2$ and   
each principal orbit of the action is a tube of radius 
$r\in]0,\pi/2\sqrt2[$ around the totally geodesic $Q^{m-1}\subset Q^m$.
A principal orbit of the action is a homogeneous space of the form $SO_{m+1}/S_{m-1}$
which is a $S^1$-bundle over $Q^{m-1}$, and a $S^{m-1}$-bundle over $S^m$.

From the construction of $\mathfrak A$ it is clear that $T_{[z]}Q^{m-1}$ and $T_{[z]}^\perp Q^{m-1}$ are $A$-invariant 
for each $A\in\mathfrak A^0_{[z]}$.
Moreover, since the real dimensional of $Q^{m-1}$ in $Q^m$ is 2, for each unit vector 
$\zeta\in T_{[z]}^\perp Q^{m-1}$, $[z]\in Q^{m-1}$,
there exists $A\in\mathfrak A^0_{[z]}$ such that $A\zeta=\zeta$ and so $AJ\zeta=-J\zeta$.
Hence
\[
T_{[z]}Q^{m-1}=(V(A)\ominus\mathbb R\zeta)\oplus \ J(V(A)\ominus\mathbb R\zeta).
\]  
It follows that the Jacobi operator $\hat R_\zeta:=\hat R(\cdot,\zeta)\zeta$ is given by
\[
\hat R_\zeta Y=Y+AY-2\la Y,\zeta\ra\zeta+2\la Y, J\zeta\ra J\zeta.
\]
It has two constant eigenvalues, $0$ and $2$ with corresponding eigenspaces
$J(V(A)\ominus\mathbb R\zeta)\oplus \mathbb R\zeta$ and 
$(V(A)\ominus\mathbb R\zeta)\oplus\mathbb RJ\zeta$.

We will use the standard Jacobi field method to determine the principal curvatures and their corresponding eigenspaces of a tube around a totally geodesic $Q^{m-1}$ in $Q^m$.

Fixed $r\in]0,\pi/2\sqrt2[$.
For each $[z]\in Q^{m-1}$ and unit vector $\zeta\in T_{[z]}^\perp Q^{m-1}$, 
denote by $\gamma_{\zeta}(s)$ the unit speed geodesic in $Q^m$ passes through $[z]$ at $s=0$ with intial velocity $\zeta$.

Let $\mathcal Y$ be the Jacobi field along $\gamma_{\zeta}$ with initial values 
$\mathcal Y(0)\in T_{[z]}Q^{m-1}$ and $\dot{\mathcal Y}(0)+S_\zeta \mathcal Y(0)=\dot{\mathcal Y}(0)\in T_{[z]}^\perp Q^{m-1}$, where $S_{\zeta}$ denotes the shape operator of $Q^{m-1}$ with respect to $\zeta$.
Then $\dot\gamma_{\zeta}(r)$ is a unit vector normal to the tube $M_r$ of radius $r$ around $Q^{m-1}$ at $\gamma_{\zeta}(r)$
and the tangent space of $M_r$ at $\gamma_{\zeta}(r)$ is spanned by 
$\mathcal Y(r)$. Moreover the shape operator $S$ of $M_r$ with respect to $N=-\dot\gamma_\zeta(r)$ can be determine by the eqution (cf. \cite[pp.225]{berndt-console-olmos})
\[
S\mathcal Y(r)=\dot{\mathcal Y}(r).
\]

To determine the principal curvatures of $M_r$ and their corresponding eigenspaces, we consider the following Jacobi field
\begin{align*}
\mathcal Y_X(t)=\left\{\begin{array}{rl}
(1/\sqrt2t)\sin (\sqrt2t)\mathcal E_X(t), & X=J\zeta \\
(1/\sqrt2t)\cos (\sqrt2t)\mathcal E_X(t), & X\in   V(A)\ominus\mathbb R\zeta \\
\mathcal E_X(t),                          & X\in J(V(A)\ominus\mathbb R\zeta)
\end{array}\right.
\end{align*}
where $\mathcal E_X$ is the parallel vector field along $\gamma_{\zeta}$ with $\mathcal E_X(0)=X$.
It follows that $M_r$ has three constant principal curvatures 
$\sqrt2\cot(\sqrt2r)$, $-\sqrt2\tan(\sqrt2r)$ and $0$, with 
eigenspaces $\mathbb RJ\zeta$, $V(A)\ominus\mathbb R\zeta$ and 
$J(V(A)\ominus\mathbb R\zeta)$ respectively, of which we have identified the subspaces obtained by
parallel translation along $\gamma_{\zeta}$ from $[z]$ to $\gamma_{\zeta}(r)$.

We can see that the unit vector $N$ for $M_r$ is $\mathfrak A$-principal and the shape operator $S$ satisfies
$\phi S+S\phi=-\sqrt2\tan(\sqrt2r)\phi$. We summarize these observations in the following theorem.

\begin{theorem}[\cite{berndt1}]\label{thm:tube}
Let $M$ be the tube of radius $r\in]0,\pi/2\sqrt2[$ around the totally geodesic $Q^{m-1}$ in $Q^m$.
Then the normal bundle of $M$ consists of $\mathfrak A$-principal singular tangent vectors of $Q^m$, and 
$M$ has three constant principal curvatures 
\[
\alpha=\sqrt{2}\cot(\sqrt2r), \quad \lambda=-\sqrt{2}\tan(\sqrt2r), \quad \mu=0.
\] 
The corresponding eigenspaces are 
\[
T_{\alpha}=\mathbb RJN, \quad T_{\lambda}=V(A)\ominus\mathbb RN, \quad
T_{\mu}=J(V(A)\ominus\mathbb RN) 
\]
and the corresponding multiplicities are
\[
m(\alpha)=1, \quad m(\lambda)=m-1=m(\mu)
\] 
where $A$ is a conjugation such that $AN=N$ and $N$ is a unit vector normal to $M$.
Further, the shape operator $S$ satisfies
$\phi S+S\phi=-\sqrt2\tan(\sqrt2r)\phi$.
\end{theorem}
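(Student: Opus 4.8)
The plan is to carry out in detail the Jacobi field computation outlined just above. Fix $r\in\,]0,\pi/2\sqrt2[$, a point $[z]\in Q^{m-1}$ and a unit normal $\zeta\in T^\perp_{[z]}Q^{m-1}$, and let $\gamma:=\gamma_\zeta$ be the geodesic with $\dot\gamma(0)=\zeta$; set $N:=-\dot\gamma(r)$. Since $Q^{m-1}$ is totally geodesic its second fundamental form vanishes, so the Jacobi fields $\mathcal Y$ along $\gamma$ that realise $T_{\gamma(r)}M_r$ are precisely those with $\mathcal Y(0)\in T_{[z]}Q^{m-1}$ and $\dot{\mathcal Y}(0)\in T^\perp_{[z]}Q^{m-1}$, and the shape operator satisfies $S\mathcal Y(r)=\dot{\mathcal Y}(r)$. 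First I would fix the conjugation: by the codimension-two observation there is $A\in\mathfrak A^0_{[z]}$ with $A\zeta=\zeta$, so $\zeta\in V(A)$ is $\mathfrak A$-principal; using that $J$ is parallel together with $\hat\nabla A=JA\otimes q$ from (\ref{eqn:delA}), one adjusts the $S^1$-factor of the parallel transport of $A$ along $\gamma$ to obtain a section $\tilde A$ of $\mathfrak A^0$ along $\gamma$ with $\tilde A\dot\gamma(t)=\dot\gamma(t)$ for every $t$; in particular $\tilde AN=N$, so $N$ is $\mathfrak A$-principal.

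Next, since $Q^m$ is symmetric one has $\hat\nabla\hat R=0$, so the eigenspace decomposition of $\hat R_\zeta$ --- the $2$-eigenspace $(V(A)\ominus\mathbb R\zeta)\oplus\mathbb R J\zeta$ and the $0$-eigenspace $J(V(A)\ominus\mathbb R\zeta)\oplus\mathbb R\zeta$ --- is preserved under parallel transport along $\gamma$. Matching the initial data then singles out the three Jacobi field families of Section~\ref{sec:tube}: $\tfrac1{\sqrt2}\sin(\sqrt2 t)\,\mathcal E_{J\zeta}$, which has $\mathcal Y(0)=0$ and $\dot{\mathcal Y}(0)=J\zeta$; $\cos(\sqrt2 t)\,\mathcal E_X$ for $X\in V(A)\ominus\mathbb R\zeta$; and the constant field $\mathcal E_X$ for $X\in J(V(A)\ominus\mathbb R\zeta)$ (these exhaust the $(2m-1)$-dimensional family once the radial field $t\dot\gamma$ is discarded). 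Evaluating $S\mathcal Y(r)=\dot{\mathcal Y}(r)$ on each family gives the principal curvatures $\alpha=\sqrt2\cot(\sqrt2 r)$, $\lambda=-\sqrt2\tan(\sqrt2 r)$, $\mu=0$, with multiplicities $1$, $m-1$, $m-1$ since $\dim_{\mathbb R}V(A)=m$; identifying the parallel translates at $\gamma(r)$ --- namely $\mathcal E_{J\zeta}(r)=J\dot\gamma(r)=-JN=\xi$, $\mathcal E_\zeta(r)=-N$, and the translate of $V(A)$ equal to $V(\tilde A)$ with $\tilde AN=N$ --- yields the stated eigenspaces $T_\alpha=\mathbb R JN$, $T_\lambda=V(A)\ominus\mathbb R N$, $T_\mu=J(V(A)\ominus\mathbb R N)$.

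Finally, the relation for $\phi S+S\phi$ follows by inspection of this data: $\phi\xi=0$, and $\phi$ interchanges $T_\lambda$ and $T_\mu$ because for $X\in T_\lambda\subset\mathcal D$ one has $\phi X=JX\in JV(A)\ominus\mathbb R JN=T_\mu$; hence $(\phi S+S\phi)X=(\lambda+\mu)\phi X=\lambda\phi X$ for $X\in T_\lambda\cup T_\mu$, while $(\phi S+S\phi)\xi=0$, so $\phi S+S\phi=\lambda\phi=-\sqrt2\tan(\sqrt2 r)\phi$. I expect the only delicate point to be the propagation of the conjugation along $\gamma$ in the first step --- keeping $N$ $\mathfrak A$-principal and pinning down $V(\tilde A)$ at the endpoint --- which hinges on the parallelism (\ref{eqn:delA}) of $\mathfrak A$; everything else is a routine Jacobi field computation of the type in \cite{berndt-console-olmos}.
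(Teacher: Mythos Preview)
Your proposal is correct and follows essentially the same Jacobi field approach as the paper's Section~\ref{sec:tube}: you use the same eigenspace decomposition of $\hat R_\zeta$, the same three families of Jacobi fields, and read off the identical principal curvatures and eigenspaces. In fact you supply more detail than the paper does on the two points it merely asserts --- the $\mathfrak A$-principality of $N$ (which you correctly trace to the parallelism~(\ref{eqn:delA}) of $\mathfrak A$) and the verification of $\phi S+S\phi=\lambda\phi$ via $\phi T_\lambda=T_\mu$.
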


Theorem~\ref{thm:tube} tells us that a tube around a totally geodesic $Q^{m-1}$ in $Q^m$ is Hopf and $\mathfrak A$-principal.
We shall show that the converse is also true. 
\begin{theorem}\label{thm:main}
Let $M$ be a Hopf hypersurface of the complex quadric $Q^m$, $m\geq3$. 
Then $M$ is $\mathfrak A$-principal if and only if 
$M$ an open part of a tube around a totally geodesic $Q^{m-1}$ in $Q^m$.
\end{theorem}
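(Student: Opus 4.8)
The ``if'' direction is Theorem~\ref{thm:tube}, so what has to be shown is the converse: a Hopf $\mathfrak A$-principal hypersurface $M$ in $Q^m$, $m\geq3$, is an open part of a tube around a totally geodesic $Q^{m-1}$. The plan is to first determine the shape operator of $M$ completely and then recover the tube by a focal-map (Jacobi field) argument. First I would invoke Lemma~\ref{lem:g=0}(a) to pass to a local section $A$ of $\mathfrak A^0$ with $f=1$; then $g=k=0$ and $V=0$, so $\mathcal H=\mathcal D=\ker\eta$, the endomorphism $B$ agrees with $A$ on $TM$, $A\xi=-\xi$, and $\mathcal H(1)=V(A)\ominus\mathbb RN$, $\mathcal H(-1)=\phi\mathcal H(1)=J(V(A)\ominus\mathbb RN)$, each of real dimension $m-1$.

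Next I would pin down $S$. By Lemma~\ref{lem:pri+iso}(a), $S\mathcal H(-1)=0$; by Lemma~\ref{lem:7}, $\alpha$ is constant. Applying Lemma~\ref{lem:7b} to a principal vector $E\in\mathcal H(1)$ (its $\phi$-partner lies in $\mathcal H(-1)$ and is therefore killed by $S$) forces $\alpha\lambda+2=0$, where $SE=\lambda E$. Hence $\alpha\neq0$, the value $\lambda=-2/\alpha$ is a constant, and $S$ equals $\lambda$ on the whole of $\mathcal H(1)$. Thus $M$ has exactly three constant principal curvatures $\alpha$, $\lambda=-2/\alpha$, $\mu=0$, with eigenspaces $T_\alpha=\mathbb R\xi=\mathbb RJN$, $T_\lambda=V(A)\ominus\mathbb RN$, $T_\mu=J(V(A)\ominus\mathbb RN)$ of multiplicities $1$, $m-1$, $m-1$ --- precisely the data of Theorem~\ref{thm:tube}. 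Replacing $N$ by $-N$ if necessary, I would fix $r\in]0,\pi/2\sqrt2[$ with $\alpha=\sqrt2\cot(\sqrt2 r)$, so that $\lambda=-\sqrt2\tan(\sqrt2 r)$ as well.

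For the reconstruction: since $AN=N$, the normal Jacobi operator $\hat R_N=\hat R(\cdot,N)N$ has the two constant eigenvalues $2$ (on $\mathbb RJN\oplus(V(A)\ominus\mathbb RN)$) and $0$ (on $J(V(A)\ominus\mathbb RN)$) --- this is the computation carried out just before Theorem~\ref{thm:tube} with $\zeta=N$ --- so $S$ commutes with $\hat R_N$, and, because $\mathfrak A$ is parallel along each normal geodesic $\gamma_p(t)=\exp_p(tN_p)$ while keeping $N$ fixed, the Jacobi field calculation of Section~\ref{sec:tube} applies to $M$ verbatim. Letting $\mathcal Y_X$ denote the Jacobi field along $\gamma_p$ with $\mathcal Y_X(0)=X$ and $\dot{\mathcal Y}_X(0)=-SX$, that calculation yields $\mathcal Y_X(r)=0$ for $X\in T_\alpha$, and $\mathcal Y_X(r)\neq0$, $\dot{\mathcal Y}_X(r)=0$ for $X\in T_\lambda\oplus T_\mu$. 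Consequently the focal map $F\colon M\to Q^m$, $F(p)=\exp_p(rN_p)$, satisfies $dF_p(X)=\mathcal Y_X(r)$, has constant rank $2m-2$ with kernel $\mathbb R\xi_p$, and $\dot\gamma_p(r)\perp\mathrm{im}\,dF_p$; so after shrinking $M$, $P:=F(M)$ is an embedded $(2m-2)$-dimensional submanifold met orthogonally by the $\gamma_p$ at distance $r$, and every point of $M$ lies on a unit normal geodesic of $P$ of length $r$, i.e.\ $M$ lies on the tube of radius $r$ around $P$.

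Finally I would identify $P$. Its tangent space at $F(p)$ is the parallel transport along $\gamma_p$ of $(V(A)\ominus\mathbb RN)\oplus J(V(A)\ominus\mathbb RN)$, which is $J$-invariant, so $P$ is a complex submanifold; and since $\dot{\mathcal Y}_X(r)=0$ for every $X\in T_\lambda\oplus T_\mu$, the standard focal-Jacobi-field formula for the second fundamental form of a focal manifold (see \cite[pp.~225]{berndt-console-olmos}) shows that the second fundamental form of $P$ in the normal direction $\dot\gamma_p(r)$ vanishes, hence, $P$ being complex, also in the direction $J\dot\gamma_p(r)$; so $P$ is totally geodesic. A totally geodesic complex hypersurface of $Q^m$ is an open part of a totally geodesic $Q^{m-1}$, so $M$ lies on the tube of radius $r$ around such a $Q^{m-1}$; being connected and of the same dimension as that tube, $M$ is an open part of it. I expect the genuinely delicate stage to be this last one --- verifying the constant rank and (local) embeddedness of $F$, computing the second fundamental form of $P$, invoking the classification of totally geodesic complex hypersurfaces of $Q^m$, and reducing the resulting local statement to a single tube --- whereas the determination of the principal curvatures in the first two steps is essentially forced by Lemmas~\ref{lem:pri+iso}, \ref{lem:7} and \ref{lem:7b}.
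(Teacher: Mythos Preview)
Your proposal is correct and follows essentially the same line as the paper's proof: determine the three constant principal curvatures from $S\mathcal H(-1)=0$ together with Lemma~\ref{lem:7} and the relation $\alpha\lambda+2=0$ (the paper obtains this directly from (\ref{eqn:210}) rather than via Lemma~\ref{lem:7b}, but this is the same computation), then use the Jacobi-field/focal-map argument to produce a totally geodesic complex hypersurface and invoke the classification of totally geodesic submanifolds of $Q^m$. Your write-up is in fact slightly more careful than the paper's in spelling out why $S$ preserves $\mathcal H(1)$ and why $S$ commutes with $\hat R_N$, but the strategy and all key steps coincide.
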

\begin{proof}
Suppose $M$ is Hopf and $\mathfrak A$-principal.
For each $[z]\in M$, 
since $S\mathcal H(-1)=0$ and $\phi \mathcal H(-1)=\mathcal H(1)$, after putting $X\in\mathcal H(-1)$ 
in (\ref{eqn:210}), we have $\alpha S\phi X=-2\phi X$
of which implies that $\alpha\neq0$ and $S\phi X=-(2/\alpha)\phi X$.
By Lemma~\ref{lem:7}, $\alpha$ is a constant, without loss of generality, 
we put $\alpha=\sqrt2\cot(\sqrt2r)$ with $0<r<\pi/2\sqrt2$.
Hence we see that $M$ has three constant principal curvatures:
\[
\alpha=\sqrt{2}\cot(\sqrt2r), \quad \lambda=-\frac{2}{\alpha}=-\sqrt{2}\tan(\sqrt2r), \quad \mu=0.
\] 
The corresponding principal curvature spaces are 
\[
\mathcal T_{\alpha}=\mathbb R\xi, \quad 
\mathcal T_{\lambda}=\mathcal H(1), \quad
\mathcal T_{\mu}=\mathcal H(-1) 
\]
and the corresponding multiplicities are
\[
m(\alpha)=1, \quad m(\lambda)=m-1=m(\mu).
\]

We will use the Jacobi field method again to determine the focal submanifold of $M$. 
As in Section~\ref{sec:tube},
denote by $\gamma_{N}(s)$ is the unit speed geodesic in $Q^m$ passes through $[z]\in M$ at $s=0$ with initial velocity $N_{[z]}$. Since $M$ is a real hypersurface in $Q^m$, we may identify the unit normal bundlle $B(M)$ as $M$, and the focal map $\Phi_r([z])=\gamma_{N}(r)$.

Let $\mathcal Y_X$ be the Jacobi field along $\gamma_{N}$ with initial values 
$\mathcal Y_X(0)=X\in T_xM$ and $\dot{\mathcal Y}_X(0)=-SX$.
Then 
\[
d\Phi_r(\sigma)X=\mathcal Y_X(r).
\]

As $N$ is $\mathfrak A$-principal, by using (\ref{eqn:hatR}), the normal Jacobi operator 
$R_N:=\hat R(\cdot,N)N$ is given by
\[
R_NY=Y+BY+2\eta(X)\xi.
\] 
It follows that $R_N$ has two constant eigenvalues $0$, $2$ with corresponding eigenspaces
$\mathcal T_\mu$ and $\mathcal T_\lambda\oplus T_\alpha$ respectively.

To compute $d\Phi_r([z])X$, $X\in T_{[z]}M$,
we select the Jacobi field
\begin{align}\label{eqn:jacobi}
\mathcal Y_X(t)=\left\{\begin{array}{rl}
\left(\cos (\sqrt2t)-({\alpha}/{\sqrt{2}})\sin (\sqrt2t)\right)\mathcal E_X(t), & X=\xi \\
\left(\cos (\sqrt2t)-({\lambda}/{\sqrt{2}})\sin (\sqrt2t)\right)\mathcal E_X(t), & X\in T_\lambda \\
\mathcal E_X(t), & X\in T_\beta
\end{array}\right.
\end{align}
where $\mathcal E_X$ is the parallel vector field along $\gamma_{[z]}$ with $\mathcal E_X(0)=X$.
Then we have $d\Phi_r([z])X=\mathcal Y_X(r)=0$ if and only if $X=\xi$ and conclude that $\Phi_r$ has constant rank $2m-2$.
It follows that $\Phi_r$ is locally a submersion onto a submanifold $\tilde M$ in $Q^m$ of real dimension $2m-2$. 

Note that $T_\lambda\oplus T_\mu=\mathcal D_{[z]}$ is invariant under $J$, $J$ is invariant under parallel translation along geodesics and the tangent space $T_{\Phi_r([z])}\tilde M$  of $\tilde M$ at $\Phi_r([z])$  is obtained by parallel translation of $T_\lambda\oplus T_\beta$ along the geodesic $\gamma_{[z]}$, we see that $\tilde M$ is a complex $(m-1)$-dimensional complex submanifold in $Q^m$, that is, a complex hypersurface.
 
Now we claim that $\tilde M$ is totally geodesic.
To prove this claim, we note that the vector $\zeta=\dot\gamma_{N}(r)$ is a unit normal vector of $\tilde M$ at $\Phi_r([z])$ and the shape operator $\tilde S_\zeta$ of $\tilde M$ in $Q^m$ with respect to $\zeta$ can be determined by
$\tilde S_\zeta X=-\dot{\mathcal Y}_X(r)$, where $X\in T_\lambda\oplus T_\mu$ and $\mathcal Y_X$ is the Jacobi fields given by (\ref{eqn:jacobi}).
First, it is clear that $\dot{\mathcal Y}_X(r)=0$ for $X\in T_\beta$. 
Next, as $\lambda=-\sqrt{2}\tan(\sqrt2r)$ we see that $\dot{\mathcal Y}_X(r)=0$ for $X\in T_\lambda$.
Hence, $\tilde M$ is a totally geodesic complex hypersurface in $Q^m$. 

By the rigidity of totally geodesic submanifolds, $M$ is an open part of a tube of radius $r$ around a  connected, complete, totally geodesic complex hypersurface $\tilde M$ of $Q^m$.
According to the classification of totally geodesic submanifolds in $Q^m$ (cf. \cite{klein}), 
$\tilde M$ is the totally geodesic 	complex hypersurface $Q^{m-1}$ in $Q^m$. This implies that $M$ is locally congruent to a tube around $Q^{m-1}$ in $Q^m$.
\end{proof}

\section{Contact real hypersurfaces in $Q^m$}
Let $M$ be a real hypersurface in a K\"ahler manifold $\hat M$. 
Denote by $\Phi(\cdot,\cdot):=\la\cdot,\phi~\cdot\ra$ the fundamental $2$-form.
If there exist a non-zero function $\rho$ on $M$ such that $d\eta=\rho\Phi$, then $M$  admits a contact structure.
In this case, we called $M$ a \emph{contact real hypersurface} in $\hat M$.
Since $d\eta(X,Y)=\la (\phi S+S\phi)X,Y\ra$, a real hypersurface $M$ in $\hat M$ is contact if and only if 
\begin{align}\label{eqn:contact1}
\phi S+S\phi=\rho\phi
\end{align}
for some non-zero function $\rho$ on $M$. 


\begin{theorem}\label{thm:contact}
Let $M$ be a real hypersurface in $Q^m$. Then $M$ is contact if and only if 
$M$ is an open part of a tube around a totally geodesic $Q^{m-1}$ in $Q^m$.
\end{theorem}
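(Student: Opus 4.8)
The plan is to establish both directions, with the forward direction being the substantive one. The ``if'' direction is immediate from Theorem~\ref{thm:tube}: a tube around a totally geodesic $Q^{m-1}$ satisfies $\phi S + S\phi = -\sqrt2\tan(\sqrt2 r)\phi$ with $-\sqrt2\tan(\sqrt2 r)\neq0$ for $r\in\,]0,\pi/2\sqrt2[$, so it is contact. For the ``only if'' direction, assume (\ref{eqn:contact1}) holds with $\rho\neq0$. The strategy is to show $M$ must be Hopf and $\mathfrak A$-principal, and then invoke Theorem~\ref{thm:main}. First I would show $M$ is Hopf: acting on $\xi$ in (\ref{eqn:contact1}) and using $\phi\xi=0$ gives $\phi S\xi = \rho\phi\xi = 0$, so $S\xi - \eta(S\xi)\xi = \phi^2 S\xi$ lies in $\ker\phi = \mathbb R\xi$, forcing $S\xi = \alpha\xi$ with $\alpha = \eta(S\xi)$. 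Hence $M$ is automatically Hopf, and all the machinery of Section~4 applies.

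Next I would identify $\rho$ and pin down $\alpha$. Since $M$ is now Hopf, I can feed (\ref{eqn:contact1}) into the fundamental Hopf identity (\ref{eqn:210}): substituting $\phi S + S\phi = \rho\phi$ turns $2S\phi S - \alpha(\phi S + S\phi) - 2\phi = 2S\phi S - (\alpha\rho + 2)\phi$ into something expressible via $\rho$. More usefully, for a principal vector $E\in\mathcal H$ with $SE = \lambda E$, $S\phi E = \mu\phi E$, applying (\ref{eqn:contact1}) to $E$ gives $\phi S E + S\phi E = \lambda\phi E + \mu\phi E = \rho\phi E$, so $\lambda + \mu = \rho$ on all of $\mathcal H$; combined with the relation $-2\lambda\mu + \alpha(\lambda+\mu) + 2 = 0$ from Lemma~\ref{lem:7b}, we get $\lambda\mu = (\alpha\rho + 2)/2$, so $\lambda,\mu$ are the two roots of a fixed quadratic and in particular each principal curvature on $\mathcal H$ takes one of at most two values, with $\lambda + \mu$ and $\lambda\mu$ constant once $\alpha$ and $\rho$ are known to be constant.

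The crux is ruling out the case where $N$ is \emph{not} $\mathfrak A$-principal. Here I would argue that contact forces $\alpha$ constant, hence by Lemma~\ref{lem:7} either $\mathfrak A$-principal or $\mathfrak A$-isotropic, and then eliminate the $\mathfrak A$-isotropic possibility. To see $\alpha$ is constant: from $\phi S + S\phi = \rho\phi$, taking traces gives $2\,\trace(\phi S) = \rho\,\trace\phi = 0$, which is automatic, so instead I would use (\ref{eqn:200}), which says $\grad\alpha = (\xi\alpha)\xi - 2(fV^\circ + g\phi V)$; pairing the contact condition with the structure equations of Lemma~\ref{lem:7b} and the derivative formulas (\ref{eqn:a-40})--(\ref{eqn:a-20}) should force $\xi\alpha = 0$ and, in the non-$\mathfrak A$-principal case, a contradiction with $\rho \neq 0$. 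Concretely, if $M$ were $\mathfrak A$-isotropic then by Lemma~\ref{lem:pri+iso}(b) we have $SV = S\phi V = 0$, so $V, \phi V\in\mathcal H^\perp$ are principal with curvature $0$; but the contact condition applied to $V\in\mathcal D$ (note $\eta(V)=g=0$ in the $\mathfrak A$-isotropic case) together with the extra relation $-2t\omega + \alpha(t+\omega) + 2 - 2k^2 = 0$ from Lemma~\ref{lem:7b} with $t = \omega = 0$, $k = 1$ gives $2 - 2 = 0$ — no contradiction yet — so I would instead look at the vectors in $\mathcal H$: there $\lambda + \mu = \rho$ and $-2\lambda\mu + \alpha\rho + 2 = 0$, and separately the contact condition relates $S$ on $\mathcal H^\perp$ to $\rho$, forcing $\rho = 0$ on the zero-eigenvalue directions $V, \phi V$, contradicting $\rho\neq0$. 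Thus $N$ is $\mathfrak A$-principal. Once $M$ is Hopf and $\mathfrak A$-principal, Theorem~\ref{thm:main} finishes the proof. I expect the main obstacle to be the bookkeeping in the $\mathfrak A$-isotropic elimination, where one must carefully track which of the frame vectors $\xi, V, \phi V$ and $\mathcal H$ carry which principal curvatures and confront them with the single scalar $\rho$; the contact condition is rigid precisely because $\rho$ is the \emph{same} function in every direction, so the zero principal curvatures forced by Lemma~\ref{lem:pri+iso} will collide with $\rho\neq0$.
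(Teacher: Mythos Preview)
Your overall architecture matches the paper's: show $M$ is Hopf, rule out the $\mathfrak A$-isotropic case, rule out the ``neither principal nor isotropic'' case, then invoke Theorem~\ref{thm:main}. The Hopf step and the ``if'' direction are fine, and your eventual elimination of the $\mathfrak A$-isotropic case is exactly the paper's one-line argument: $(\phi S+S\phi)V=\rho\phi V$ together with $SV=S\phi V=0$ forces $\rho\phi V=0$, contradicting $\rho\neq0$.

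The genuine gap is in the ``neither principal nor isotropic'' case, and your framing there is slightly off. You propose to show $\alpha$ is constant and then invoke Lemma~\ref{lem:7}. But Lemma~\ref{lem:7} is an if-and-only-if: in the ``neither'' case $\alpha$ is \emph{never} constant, so you cannot hope to prove constancy---you must derive a contradiction directly. Moreover, the formulas (\ref{eqn:a-40})--(\ref{eqn:a-20}) alone do not force $\xi\alpha=0$: they express $\xi\alpha$ in terms of $f,g,t,\omega,\lambda,\mu$ and inner products like $\la BE,E\ra$, but do not kill it. The paper supplies two further Codazzi identities,
\[
\la(\nabla_ES)\phi E-(\nabla_{\phi E}S)E,\,V^\circ\ra=0,\qquad
\la(\nabla_ES)\phi E-(\nabla_{\phi E}S)E,\,\phi V\ra=0,
\]
expands them via Lemma~\ref{lem:B+theta}, and combines the result with (\ref{eqn:a-40})--(\ref{eqn:a-50}) together with $\lambda+\mu=t+\omega=\rho$ and the two quadratic relations of Lemma~\ref{lem:7b} to obtain $(\xi\alpha)\rho k^2=0$, hence $\xi\alpha=0$. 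Only then does (\ref{eqn:a-20}) yield $g(t-\alpha)=f(\omega-\alpha)=0$, after which a short case split (using crucially that $\rho$ is a nonzero \emph{constant}, cited from \cite{berndt-suh0}, a fact you never invoke) produces the contradiction via differentiation of the scalar relation $-2\alpha^2+\rho\alpha-2+2k^2=0$. None of this machinery is visible in your sketch; the phrase ``should force $\xi\alpha=0$ and \ldots\ a contradiction'' hides precisely the computations that constitute the proof.
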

\begin{proof}
Suppose $M$ is a contact real hypersurface, thas is, it satisfies (\ref{eqn:contact1}). Then it is clear that $M$ is Hopf.
Furthermore, $\rho$ must be a nonzero constant (cf. \cite{berndt-suh0}). 
We first consider the case $M$ is neither $\mathfrak A$-isotropic nor $\mathfrak A$-principal.
Then there exists an open subset $\mathcal U\subset M$ on which $0<k<1$.
Without loss of generality, we assume $\mathcal U=M$. 

Let $\lambda$, $\mu$, $t$, $\omega$ and $E$ be as stated in Lemma~\ref{lem:7b}. 
We can assume that $E$ is a unit vector. By the Codazzi equation, we have 
\begin{align}
0=&\la(\nabla_ES)\phi E-(\nabla_{\phi E}S)E,V^\circ\ra \notag\\
=&\la(t\mathbb I-S)\nabla_EV^\circ,\phi E\ra-\la(t\mathbb I-S)\nabla_{\phi E}V^\circ, E\ra \notag\\
=&-tg(\lambda+\mu)-t(\lambda-\mu)\la BE,\phi E\ra+2g\lambda\mu. \label{eqn:a-100}
\end{align}
Similarly, we compute
\begin{align}
0=&\la(\nabla_ES)\phi E-(\nabla_{\phi E}S)E,\phi V\ra \notag\\
=&\la(t\mathbb I-S)\nabla_EV^\circ,\phi E\ra-\la(t\mathbb I-S)\nabla_{\phi E}V^\circ, E\ra \notag\\
=&\omega f(\lambda+\mu)-\omega(\lambda-\mu)\la BE,E\ra-2f\lambda\mu. \label{eqn:a-110}.
\end{align}
It follows from (\ref{eqn:a-40})--(\ref{eqn:a-50}) and (\ref{eqn:a-100})--(\ref{eqn:a-110}) that 
$
(\xi\alpha)\{(\lambda+\mu)t\omega-(t+\omega)\lambda\mu\}=0.
$
By applying Lemma~\ref{lem:7b} and the fact that $\lambda+\mu=\omega+t=\rho$, we obtain
$(\xi\alpha)\rho k^2=0$ and hence 
$
\xi\alpha=0.
$

Since $V^\circ$ and $\phi V$ are orthogonal, we obtain $g(t-\alpha)=f(\omega-\alpha)=0$
by (\ref{eqn:a-20}).
If $fg\neq0$, then $t=\omega=\alpha$. But these imply that $\alpha=(t+\omega)/2=\rho/2$ is a constant; a contradiction to 
Lemma~\ref{lem:7}.
Hence we have either $f=0$ or $g=0$.
Without loss of generality, we assume $g=0$, hence $f\neq0$ and $\alpha=\omega=\rho-t$. 
By substituting these into the second equation in  Lemma~\ref{lem:7b}, give
$-2\alpha^2+\rho\alpha-2+2k^2=0$.
By applying Lemma~\ref{lem:B+theta}(c) and (\ref{eqn:200}), we see that 
\begin{align*}
0=(-4\alpha+\rho)\grad\alpha+2\grad(k^2)=-2f(-4\alpha+\rho)V-2\grad(f^2)=-3\rho.
\end{align*}
This is a contradiction. 
Consequently, $M$ is either $\mathfrak A$-isotropic or  $\mathfrak A$-principal.
It is clear that  $N$ is  not $\mathfrak A$-isotropic everywhere 
(for otherwise,  we have $2\rho\phi V=(\phi S+S\phi)V=0$ by virtue of  Lemma~\ref{lem:pri+iso}, which is impossible).
Hence  $M$ is $\mathfrak A$-principal.
According to Theorem~\ref{thm:main}, $M$ is an open part of a tube around a totally geodesic $Q^{m-1}$ in $Q^m$.

Conversely, as shown in Theorem~\ref{thm:tube}, the shape operator of a tube of radius $r$ around a totally geodesic $Q^{m-1}$ in $Q^m$ satisfies $\phi S+S\phi=-\sqrt{2}\tan(\sqrt2r)\phi $. Hence, it is contact and this completes the proof.
\end{proof}

\begin{remark}
Contact real hypersurfaces in K\"ahler manifolds with constant mean curvature were studied in \cite{berndt-suh0}.
\end{remark}

Next, we study real hypersurfaces $M$ in $Q^m$ under a weaker version of (\ref{eqn:contact1}), i.e., 
\begin{align} \label{eqn:B:D}
\phi(\phi S+S\phi-\rho\phi)\phi=0,
\end{align}
for some function $\rho$ on $M$.
We shall first derive some identities from the condition  (\ref{eqn:B:D}).
Note that (\ref{eqn:B:D}) is equivalent to
\begin{align*}
\la(\phi S+S\phi - \rho\phi)Y,Z\ra=0
\end{align*}
for any vector fields $Y$ and $Z$ in $\mathcal D$.
Differentiating this equation covariantly in the direction of $X$  in $\mathcal D$ we get
\begin{align*}
   \la\phi SY,\nabla_XZ\ra+\la(\nabla_X\phi)SY+\phi(\nabla_XS)Y+\phi S\nabla_XY,Z\ra	 &	\\ \nonumber
+\la S\phi Y,\nabla_XZ\ra+\la(\nabla_XS)\phi Y+S(\nabla_X\phi) Y+S\phi\nabla_XY,Z\ra	&\\ \nonumber
- d\rho(X)\la\phi Y,Z\ra-\rho\la\phi Y,\nabla_XZ\ra-\rho\la(\nabla_X\phi)Y+\phi\nabla_XY,Z\ra&=0.
\end{align*}
By using (\ref{eqn:delxi}) and (\ref{eqn:B:D}), this equation can be reformed as
\begin{align*}
-\la Z,\phi S\xi\ra\la\phi SX,Y\ra+\la Y,\phi S\xi\ra\la \phi SX,Z\ra
-\la(\nabla_XS)Y,\phi Z\ra+\la(\nabla_XS)Z,\phi Y\ra \\ \nonumber
		+\eta(SY)\la SX,Z\ra-\eta(SZ)\la SX,Y\ra - d\rho(X)\la\phi Y,Z\ra=0.
\end{align*}
Now by replacing $X,Y$ and $Z$ cyclically in the above equation and then summing these equations, with the help of the Codazzi equation 
Lemma~\ref{lem:theta}(e) and (\ref{eqn:B:D}), we obtain
\begin{align*}
\mathfrak S(\rho\la X,\phi S\xi\ra+d\rho(X))\la\phi Y,Z\ra=0
\end{align*}
where $\mathfrak S$ denotes the cyclic sum over $X,Y$ and $Z$.
Let $X$ be an arbitrary vector in $\mathcal D$. 
Since $m\geq3$, we may $Y\perp X,\phi X$ and $Z=\phi Y$ in the above equation then 
\[
\rho\la X,\phi S\xi\ra+d\rho(X)=0
\]
for any $X$ in $\mathcal D$.


In a special case where $\rho$ is a non-zero constant, The above equation implies that $\phi S\xi=0$ which means $\xi$ is principal and  
so $(\phi S+S\phi - \rho\phi)\xi=0$. 
Consequently, we have $\phi S+S\phi - \rho\phi=0$, for some non-zero constant $\rho$, and hence it follows from 
Theorem~\ref{thm:contact} that we obtain

\begin{theorem} \label{thm:contact1} 
Let $M$ be a real hypersurface in $Q^m$, $n\geq3$. Then $M$ satisfies 
\begin{align*}
\phi(\phi S+S\phi-\epsilon\phi)\phi=0
\end{align*}
for some constant $\epsilon\neq0$ if and only if $M$ is an open part of a tube around a totally geodesic $Q^{m-1}$ in $Q^m$.
\end{theorem}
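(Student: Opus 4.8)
The plan is to reduce Theorem~\ref{thm:contact1} to Theorem~\ref{thm:contact} via the general differential identity just established in the text. The forward direction (tubes around $Q^{m-1}$ satisfy the condition) is immediate: by Theorem~\ref{thm:tube} such a tube has $\phi S+S\phi=-\sqrt2\tan(\sqrt2r)\phi$, so taking $\epsilon=-\sqrt2\tan(\sqrt2r)\neq0$ the stronger equation $\phi S+S\phi-\epsilon\phi=0$ holds, and applying $\phi$ on both sides gives $(\ref{eqn:B:D})$ trivially. So the real content is the converse.

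For the converse, suppose $M$ satisfies $\phi(\phi S+S\phi-\epsilon\phi)\phi=0$ with $\epsilon$ a nonzero constant. The preceding computation in the text, with $\rho=\epsilon$ constant so that $d\rho\equiv0$, yields $\epsilon\la X,\phi S\xi\ra=0$ for all $X\in\mathcal D$. Since $\epsilon\neq0$ this forces $\phi S\xi$ to be orthogonal to all of $\mathcal D$; but $\phi S\xi\in\mathcal D$ (as $\eta(\phi S\xi)=0$), hence $\phi S\xi=0$. Applying $\phi$ and using $\phi^2=-\mathrm{Id}+\eta\otimes\xi$ together with $\eta(S\xi)\xi=(\la S\xi,\xi\ra)\xi$ gives $S\xi=\la S\xi,\xi\ra\xi$, i.e. $\xi$ is principal and $M$ is Hopf. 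Then $(\phi S+S\phi-\epsilon\phi)\xi=\phi S\xi+S\phi\xi-\epsilon\phi\xi=0$ automatically, so the operator $\phi S+S\phi-\epsilon\phi$ annihilates both $\mathcal D$ (by $(\ref{eqn:B:D})$ and the fact that it maps $\mathcal D$ to $\mathcal D$, which one checks since $\phi(\phi S+S\phi-\epsilon\phi)$ applied to $\mathcal D$ already vanishes after the second $\phi$ — here one must argue the image lands in $\mathcal D$) and $\mathbb R\xi$; therefore $\phi S+S\phi-\epsilon\phi=0$ identically on $TM$. Now $M$ is a contact real hypersurface in the sense of $(\ref{eqn:contact1})$ with the nonzero constant $\rho=\epsilon$, and Theorem~\ref{thm:contact} identifies it as an open part of a tube around a totally geodesic $Q^{m-1}$.

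The step requiring the most care is passing from $\phi(\phi S+S\phi-\epsilon\phi)\phi=0$ and $\phi S\xi=0$ to the unqualified identity $\phi S+S\phi-\epsilon\phi=0$. The hypothesis only controls the operator after pre- and post-composition with $\phi$, which kills the $\xi$-components; one must separately verify that, once $\xi$ is principal, $(\phi S+S\phi-\epsilon\phi)$ sends $\mathcal D$ into $\mathcal D$ and sends $\xi$ to $0$, so that the double-$\phi$ condition upgrades to the genuine condition. This is a short linear-algebra argument using $\phi\xi=0$, $\phi^2=-\mathrm{Id}+\eta\otimes\xi$, and $S\phi\xi=0$, $\phi S\xi=0$, but it is the pivot of the whole reduction; everything else is either the cited computation or a direct appeal to Theorem~\ref{thm:contact}.
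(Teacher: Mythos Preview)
Your proposal is correct and follows essentially the same route as the paper: use the preceding differential identity with $\rho=\epsilon$ constant to get $\phi S\xi=0$, hence $M$ is Hopf, then upgrade the $\phi$-sandwiched condition to $\phi S+S\phi=\epsilon\phi$ and invoke Theorem~\ref{thm:contact}. You are right that the only point needing a moment's care is showing $(\phi S+S\phi-\epsilon\phi)\mathcal D\subset\mathcal D$ once $\xi$ is principal, and your sketch of that (using $\eta(S\phi Z)=\la\phi Z,S\xi\ra=\alpha\eta(\phi Z)=0$) is exactly what is required.
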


\begin{remark}
Theorem \ref{thm:contact1} was proved in \cite{kon-loo} for real hypersurfaces in non-flat complex space forms.
\end{remark}

\section{Pseudo-Einstein real hypersurfaces in $Q^m$}
Suppose that $M$ is pseudo-Einstein, that is, 
\begin{align}\label{eqn:Einstein}
\Ric X=&aX+b\eta(X)\xi	
\end{align}
where $a, b$ are constants.
By (\ref{eqn:Ricci-tensor}), we see that $M$ is pseudo-Einstein if and only if 
\begin{align}\label{eqn:P}
PX=(2m-a-1)X-(3+b)\eta(X)\xi+\la X,V\ra V+\la X,B\xi\ra B\xi-fBX-g\theta X
\end{align}
where $P:=S^2-hS$. 
\begin{lemma}\label{lem:schur-2}
Let $M$ be a pseudo-Einstein real hypersurface in $Q^m$, $m\geq 3$. If $b\neq0$, then $M$ is Hopf. 
\end{lemma}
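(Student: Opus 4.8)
The plan is to show that if $M$ is pseudo-Einstein with $b \neq 0$, then the Reeb vector field $\xi$ is principal, i.e. $S\xi = \alpha \xi$ for some function $\alpha$. The strategy is to extract information from the pseudo-Einstein condition (\ref{eqn:P}) by applying it to $\xi$ and by taking a suitable covariant derivative, then use the Codazzi equation to force $\phi S\xi = 0$.

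First I would evaluate (\ref{eqn:P}) at $X = \xi$. Using Lemma~\ref{lem:B}(a) ($B\xi = -f\xi + \phi V$) and Lemma~\ref{lem:theta}(a) ($\theta\xi = -V$), the right-hand side simplifies; since $\la \xi, V\ra = g$ and $\la \xi, B\xi\ra = -f$, one gets
\begin{align*}
P\xi = (2m-a-1)\xi - (3+b)\xi + gV - f(-f\xi+\phi V) - f(-f\xi+\phi V) + gV,
\end{align*}
which collects to $P\xi = (2m - a - 1 + 2f^2 - 3 - b)\xi + 2gV - 2f\phi V$. Recalling $V = V^\circ + g\xi$, the component of $P\xi$ orthogonal to $\xi$ is $2gV^\circ - 2f\phi V$ (note $\phi V = \phi V^\circ$). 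On the other hand $P\xi = S^2\xi - hS\xi$, so $S\xi$ satisfies a relation that already constrains its $\mathcal D$-component. The idea is then to combine this with the general machinery: either $\xi$ is principal (and we are done), or $S\xi$ has a nonzero $\mathcal D$-component, and I must derive a contradiction with $b \neq 0$.

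The main work is to differentiate (\ref{eqn:P}) and use Codazzi. Taking the covariant derivative of (\ref{eqn:P}) and using the identities in Lemma~\ref{lem:B+theta} for $\nabla B$, $\nabla V$, $\nabla\theta$, $\nabla B\xi$, together with (\ref{eqn:delxi}), produces an expression for $(\nabla_X P)Y - (\nabla_Y P)X$. Since $P = S^2 - hS$, this can also be expanded using the Codazzi equation (\ref{eqn:50}) for $\nabla S$. Equating the two expressions and contracting appropriately — e.g. setting $Y = \xi$ and tracing, or pairing against $V$ and $\phi V$ — should yield an equation in which the coefficient $b$ appears multiplying a term involving $\phi S\xi$ (or $V^\circ$, $f$, $g$). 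Because $b \neq 0$, this forces that term to vanish, and a short argument (using Lemma~\ref{lem:contact}, that $\phi S + S\phi \neq 0$ on any open set, to rule out degenerate branches) then gives $\phi S\xi = 0$, i.e. $M$ is Hopf.

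The hard part will be the bookkeeping in the differentiated identity: carefully expanding $(\nabla_X P)Y$ via Lemma~\ref{lem:B+theta} and matching it against the $S^2 - hS$ form, while keeping track of the many $q$-form terms, and then choosing the right contractions so that the $b$-dependence is isolated cleanly. I expect that after the dust settles one obtains something like $b\,\phi S\xi = (\text{terms that vanish or are absorbed})$, possibly after first using the $X=\xi$ evaluation above to eliminate $h$ and $\alpha$-type quantities; the continuity/openness argument at the end is routine once the key identity is in hand.
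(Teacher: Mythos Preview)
Your strategy is not wrong in spirit, but it misses the shortcut the paper actually takes, and the part you flag as ``hard'' is in fact unnecessary. The paper never differentiates (\ref{eqn:P}) or touches the $B$, $\theta$, $V$, $q$-machinery at all; it works directly with the pseudo-Einstein form $\Ric X = aX + b\eta(X)\xi$ and applies the contracted second Bianchi identity $d(\trace\Ric)=2\diver\Ric$. Since $a,b$ are constants, $\trace\Ric=(2m-1)a+b$ is constant, while a one-line computation using $\nabla_X\xi=\phi SX$ gives $(\nabla_X\Ric)Y=b\{\la\phi SX,Y\ra\xi+\eta(Y)\phi SX\}$ and hence $\diver\Ric(X)=b\la\phi S\xi,X\ra$. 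Thus $b\,\phi S\xi=0$, and $b\neq0$ finishes the proof immediately---no evaluation of (\ref{eqn:P}) at $\xi$, no Codazzi, no Lemma~\ref{lem:contact}, no continuity argument.

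Your route---differentiating (\ref{eqn:P}) and matching against $(\nabla_X P)Y-(\nabla_Y P)X$ with $P=S^2-hS$---is in principle equivalent (it amounts to rediscovering the Bianchi identity through the curvature formula), but note that the antisymmetrized derivative of $S^2$ produces terms like $(\nabla_XS)SY-(\nabla_YS)SX$ which are \emph{not} directly of Codazzi type, so the bookkeeping is worse than you suggest; you would really need to trace first (i.e.\ compute $\diver P$) to make the Codazzi equation usable, and then all the $B,\theta,V$ terms from (\ref{eqn:P}) enter the divergence and must be shown to cancel. That is doable but is exactly the work the Bianchi identity does for free. Your preliminary computation of $P\xi$ is correct but plays no role, and the appeal to Lemma~\ref{lem:contact} at the end is a red herring: once $b\phi S\xi=0$ and $b\neq0$, you are done pointwise.
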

\begin{proof}
It follows from the hypothesis (\ref{eqn:Einstein}) that
\begin{align*}
(\nabla_X\Ric)Y=&b\{\la \phi SX,Y\ra\xi+\eta(Y)\phi SX\}.	
\end{align*}
Take an orthonormal basis $\{e_1,\cdots,e_{2m-1}\}$ on $T_{[z]}M$. Then
\begin{align*}
X(\trace \Ric )=\sum^{2m-1}_{j=1}\la(\nabla_X\Ric )e_j,e_j\ra=0\\
\diver \Ric (X)=\sum^{2m-1}_{j=1}\la(\nabla_{e_j}\Ric )X,e_j\ra=b\la\phi S\xi,X\ra.
\end{align*}
By the well-known formula $d(\trace \Ric )=2\diver \Ric $, we obtain
$b\phi S\xi=0$. Hence we conclude that $M$ is Hopf if $b\neq0$.
\end{proof}

\begin{theorem}\label{thm:Eintein0}
Let $M$ be a pseudo-Einstein real hypersurface in $Q^m$, $m\geq3$. Then $M$ is either $\mathfrak A$-principal or
$\mathfrak A$-isotropic.
\end{theorem}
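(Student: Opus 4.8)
The plan is to show that a pseudo-Einstein real hypersurface $M$ in $Q^m$ must have $N$ either $\mathfrak A$-principal or $\mathfrak A$-isotropic everywhere. First I would split into cases according to whether $b=0$ or $b\neq0$. If $b\neq0$, Lemma~\ref{lem:schur-2} already gives that $M$ is Hopf, so one can work inside the Hopf framework of Section 4. If $b=0$ the Ricci tensor is a scalar multiple of the identity, i.e.\ $M$ is Einstein, and I would first aim to show that even in this case $M$ is Hopf (the divergence identity $d(\trace\Ric)=2\diver\Ric$ becomes vacuous, so one needs the structure equation (\ref{eqn:P}) directly). The idea is to use (\ref{eqn:P}) with $b=0$, apply it to $\xi$, and combine with Lemma~\ref{lem:B}(a), Lemma~\ref{lem:theta}(a) to extract information about $S\xi$; the presence of the terms $\langle X,V\rangle V$, $\langle X,B\xi\rangle B\xi$, $-fBX-g\theta X$ should force $\phi S\xi$ to lie in a controlled subspace, and a further contraction or a differentiation via Lemma~\ref{lem:B+theta} should kill it.

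Once $M$ is known to be Hopf, the strategy is to invoke Lemma~\ref{lem:7}: it suffices to prove that $\alpha=\langle S\xi,\xi\rangle$ is constant. So the core of the argument is to differentiate the pseudo-Einstein condition and show $\grad\alpha=0$. Concretely, I would take the operator $P=S^2-hS$ in (\ref{eqn:P}), restrict attention to the distribution $\mathcal D=\ker\eta$ and the subbundle $\mathcal H$, and use Lemma~\ref{lem:7b} (which gives $S\mathcal H\subset\mathcal H$ and the relation $-2\lambda\mu+\alpha(\lambda+\mu)+2=0$ on eigenpairs $(\lambda,\mu)$ of $(S,\phi S\phi)$ on $\mathcal H$). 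On $\mathcal H$ the terms involving $V$, $B\xi$ disappear up to the $fB$ and $g\theta$ pieces, and $B_{|\mathcal H}$ has eigenvalues $\pm1$ with $\phi\mathcal H(\varepsilon)=\mathcal H(-\varepsilon)$; this should yield, for each eigenpair on $\mathcal H$, a polynomial relation between $\lambda$, $\mu$, $h$, $f$, $g$ and the constants $a,b$. Coupling that with the Hopf relation from Lemma~\ref{lem:7b} and with the gradient formulas (\ref{eqn:200}), (\ref{eqn:a-40})--(\ref{eqn:a-20}), I expect to derive that $fV^\circ+g\phi V=0$ and eventually $\xi\alpha=0$, exactly as in the proof of Lemma~\ref{lem:7}, using Lemma~\ref{lem:contact} to guarantee $\phi S+S\phi\neq0$.

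The delicate point is handling the mixed case where $N$ is neither $\mathfrak A$-principal nor $\mathfrak A$-isotropic on an open set, so that $0<k<1$ and one may normalize $g=0$, $0\le f<1$ by Lemma~\ref{lem:g=0}(b). There $V^\circ\neq0$ and one has the extra principal directions $V^\circ$ and $\phi V$ with eigenvalues $t$, $\omega$ satisfying $-2t\omega+\alpha(t+\omega)+2-2k^2=0$. Plugging $X=V^\circ$ and $X=\phi V$ into (\ref{eqn:P}) and using Lemma~\ref{lem:B}, Lemma~\ref{lem:theta} to evaluate $BV$, $B\phi V$, $\theta V$, $\theta\phi V$ gives two scalar equations linking $t,\omega,\alpha,h,f,k$ to $a$; together with the Hopf relations and the Codazzi-derived identities (\ref{eqn:a-100})--(\ref{eqn:a-110}) this overdetermined system should be inconsistent unless $f=0$ (forcing $\mathfrak A$-isotropic) — mirroring the contradiction reached near the end of the proof of Theorem~\ref{thm:contact}, where a computation of $\grad\alpha$ via Lemma~\ref{lem:B+theta}(c) and (\ref{eqn:200}) produced an absurdity.

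The main obstacle I anticipate is precisely this mixed-case bookkeeping: keeping track of how the non-holomorphic terms $\langle X,V\rangle V$, $\langle X,B\xi\rangle B\xi$, $fBX$, $g\theta X$ in the Ricci formula interact with the shape operator's eigenstructure, and making sure the resulting polynomial identities in $t,\omega,\alpha,f,k,h$ genuinely force a contradiction rather than merely constraining the constants. A clean way to organize it is to first dispose of the Einstein ($b=0$) Hopf reduction, then treat $\mathfrak A$-principal and $\mathfrak A$-isotropic loci as the ``good'' set, show the ``bad'' set where $0<k<1$ is empty by the contradiction above, and finally conclude by continuity of the globally defined function $F=f^2+g^2$ exactly as in Lemma~\ref{lem:7}.
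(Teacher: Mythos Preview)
Your plan has a real gap at the very first branch. When $b=0$ you propose to ``first aim to show that even in this case $M$ is Hopf'', using (\ref{eqn:P}) applied to $\xi$ together with some unspecified contraction/differentiation. But applying (\ref{eqn:P}) to $\xi$ (with $g=0$) only gives
\[
P\xi=(2m-a-4+2f^2)\xi-2f\phi V,
\]
and if you write $S\xi=\alpha\xi+\beta$ with $\beta\perp\xi$, then $P\xi=(\alpha^2-h\alpha+|\beta|^2)\xi+(\alpha-h)\beta+(S\beta)^{\perp\xi}$, which does \emph{not} force $\beta=0$; it only says $(\alpha-h)\beta+(S\beta)^{\perp\xi}=-2f\phi V$. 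So there is no obvious way to extract $\phi S\xi=0$ here, and the rest of your argument (Lemma~\ref{lem:7b}, the identities (\ref{eqn:a-40})--(\ref{eqn:a-20}), and the reduction to Lemma~\ref{lem:7}) all require the Hopf condition. In fact the paper never proves Hopf in the $b=0$ mixed case; its eventual contradiction is reached without it.

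The paper's route is quite different from yours. After disposing of $b\neq0$ exactly as you do (Hopf forces $f=0$), it sets $b=0$ and exploits the \emph{trivial} commutation $PS=SP$ (since $P=S^2-hS$). The point is that the eigenspaces of $P$ are computed explicitly from (\ref{eqn:P}): five eigenvalues $\sigma_0,\dots,\sigma_4$ with eigenspaces $\mathbb RV$, $\mathcal H(1)$, $\mathcal H(-1)$, $\mathbb RW_3$, $\mathbb RW_4$ (where $W_3,W_4$ are specific combinations of $\xi$ and $\phi V$). Distinctness of most of the $\sigma_i$ then forces $S$ to preserve these pieces---in particular $SV=tV$, $SW_3=\kappa W_3$, and (after an auxiliary Codazzi argument) $SW_4=\tau W_4$---without any Hopf assumption. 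The contradiction then comes from a lengthy elimination: Codazzi applied to $(\cdot,\cdot,W_3)$ and $(\cdot,\cdot,W_4)$ yields relations (\ref{eqn:540})--(\ref{eqn:550}) among $\lambda,\mu,\kappa,\tau,f,k$, and combining these with the eigenvalue equations $\lambda^2-h\lambda=\sigma_2$, $\mu^2-h\mu=\sigma_1$, $\kappa^2-h\kappa=\sigma_3$, $\tau^2-h\tau=\sigma_4$ produces a polynomial system that collapses to $f$ constant, hence $t=0$, hence $\sigma_0=0$, contradicting $1+f-\sigma_0=0$. None of this passes through constancy of $\alpha$ or Lemma~\ref{lem:7}. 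If you want to salvage your outline, the missing idea is precisely this use of $PS=SP$ to get $S$-invariance of the $P$-eigenspaces in lieu of the Hopf hypothesis.
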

\begin{proof}
Suppose $M$ is neither $\mathfrak A$-principal nor
$\mathfrak A$-isotropic. Then there exists an open subset $\mathcal U\subset M$ on which $0<f<1$ and  $g=0$.
Without loss of generality. We assume $\mathcal U=M$. 
It follows from (\ref{eqn:P}) that 
$P\xi=(2m-a-2-2k^2-b)\xi-2f\phi V$.
If $b\neq0$, then $M$ is Hopf by Lemma~\ref{lem:schur-2} and so
$(\alpha^2-h\alpha)\xi= (2m-a-2-2k^2-b)\xi-2f\phi V$ which implies that $f=0$; a contradiction. Hence we have $b=0$.
It follows that $P$ has at most five distinct eigenvalues 
\begin{eqnarray*}
&	\sigma_0=2m-a, \quad
	\sigma_1=2m-a-1-f, \quad 
	\sigma_2=2m-a-1+f,& \\
&	\sigma_3=2m-a-2-2k,\quad 
	\sigma_4=2m-a-2+2k&
\end{eqnarray*}
with eigenspaces 
\begin{eqnarray*}
&	\mathcal T_0=\mathbb R V, \quad
	\mathcal T_1=\mathcal H(1), \quad 
	\mathcal T_2=\mathcal H(-1), \quad 
	\mathcal T_3=\mathbb R W_3,\quad 
	\mathcal T_4=\mathbb R W_4&
\end{eqnarray*}
where
\begin{align*}
W_3=&r\xi+\frac sk\phi V, \quad W_4=-s\xi+\frac rk\phi V, \quad 
r=\sqrt{1+k}, \quad s=\sqrt{1-k}.
\end{align*}

Since $f,k>0$ and $f^2+k^2=1$, we can easily verify the following 
\begin{align}\label{eqn:2=4}
\left. \begin{array}{l}
\sigma_0\notin\{\sigma_1,\sigma_2, \sigma_3,\sigma_4\}\\
\sigma_1\notin\{\sigma_0,\sigma_2, \sigma_3,\sigma_4\}\\
\sigma_3\notin\{\sigma_0,\sigma_1, \sigma_2,\sigma_4\}
.
\end{array}\right\}
\end{align}
Since $PS=SP$, we conclude that 
\begin{align*}
S\mathcal H(1)\subset \mathcal H(1), \quad 
SV=tV, \quad SW_3=\kappa W_3
\end{align*} 
where $t$ and $\kappa$ are functions satisfying
\begin{align}\label{eqn:eigen-3}
t^2-ht=\sigma_0, \quad \kappa^2-h\kappa=\sigma_3.
\end{align}
Let 
\[
U:=SW_4-\tau W_4\in\mathcal H(-1), \quad \tau=\frac12\la SW_4,W_4\ra.
\]
A straightforward calculation gives
\begin{align}\label{eqn:basic}
\left.\begin{array}{ll}
BW_3=W_4, & \\
\grad r=stk^{-1}V, &  \grad s=-rtk^{-1}V\\
\grad (sk^{-1})=-r(2-k)tk^{-3}V, & \grad (rk^{-1})=-s(2+k)tk^{-3}V\\
(\phi S+S\phi)W_3=-s(\kappa+t)k^{-1}V, & (\phi S+S\phi)W_4=-r(\tau+t)k^{-1}V+\phi U\\
S\phi SW_3=-s\kappa tk^{-1}V, & S\phi SW_4=-r\tau tk^{-1}V+S\phi U\\
\phi BSW_3=-r\kappa k^{-1} V, & \phi BSW_4=-s\tau k^{-1} V-\phi U\\
SB\phi W_3=sft k^{-1} V,      & SB\phi W_4=rft k^{-1} V.
\end{array}\right\}
\end{align}
By applying (\ref{eqn:basic}), we compute
\begin{align*}
(\nabla_XS)W_3
=&(X\kappa)W_3+(\kappa\mathbb I-S)\nabla_XW_3\\
=&(X\kappa)W_3-\frac{r(2-k)t}{k^{3}}\la X,V\ra(\kappa\mathbb I-S)\phi V+\frac{2(\kappa-t)}{rk}\la X,SB\xi\ra V\\
&+\frac rk(\kappa\mathbb I-S)\phi SX-\frac sk(\kappa\mathbb I-S)\phi BSX.
\end{align*}
By the Codazzi equation, we have
\begin{align}\label{eqn:500}
0=&\la(\nabla_XS)Y-(\nabla_YS)X,W_3\ra	
  +2r\left\{\la\phi X,Y\ra-\frac{\la X,V\ra\la Y,B\xi\ra-\la Y,V\ra\la X,B\xi\ra}k\right\} \notag\\
=
& (X\kappa)\la Y,W_3\ra-\frac{r(2-k)t}{k^3}\la X,V\ra\la Y,(\kappa\mathbb I-S)\phi V\ra \notag\\
&-(Y\kappa)\la X,W_3\ra+\frac{r(2-k)t}{k^3}\la Y,V\ra\la X,(\kappa\mathbb I-S)\phi V\ra \notag\\
&-\frac{2(\kappa-t)}{rk}\la X,V\ra\la Y,SB\xi\ra-\frac{2r}k\la X,V\ra\la Y,B\xi\ra 
																																										+\frac{st}k\la X,V\ra\la Y,S\xi\ra\notag\\
&+\frac{2(\kappa-t)}{rk}\la Y,V\ra\la X,SB\xi\ra+\frac{2r}k\la Y,V\ra\la X,B\xi\ra 
																																										   -\frac{st}k\la Y,V\ra\la X,S\xi\ra\notag\\
&+2r\la\phi X,Y\ra+\frac rk\la\kappa(\phi S+S\phi)X-2S\phi SX,Y\ra	
 -\frac{s\kappa}k\la(\phi BS+SB\phi)X,Y\ra.
\end{align}
Next we claim that $U=0$.
For otherwise, we have $\sigma_2=\sigma_4$ or $1+f=2k$. 
It follows that $f=3/5$. Hence  $t=0$ and so $2m-a=0$ by Lemma \ref{lem:B+theta} and  (\ref{eqn:eigen-3}).
By putting $X=W_4$ and $Y\in\mathcal H(1)$ in (\ref{eqn:500}), we obtain $r\la\kappa \phi U-2S\phi U,Y\ra+s\kappa\la\phi U,Y\ra=0$.
Since $S\mathcal H(1)\subset \mathcal H(1)$, we have 
$S\phi U=\mu\phi U$
where 
\[\mu=\frac{r+s}{2r}\kappa=\frac23\kappa.
\]
Since $\mu^2-h\mu=\sigma_1=-8/5$, we have 
\[
\frac49\kappa^2-\frac23h\kappa=-\frac85.
\]
Comparing with (\ref{eqn:eigen-3}), we obtain $\kappa^2=-18/5$; a  contradiction. Hence we conclude that $U=0$ or 
$SW_4=\tau W_4$
and  so
\begin{align}\label{eqn:eigen-4}
\tau^2-h\tau=\sigma_4.
\end{align}
Moreover, we have $S\mathcal H(\varepsilon)\subset\mathcal H(\varepsilon)$, 
$\phi S\phi \mathcal H(\varepsilon)\subset\mathcal H(\varepsilon)$ and $(SB-BS)\mathcal H(\varepsilon)=0$
 for any $\varepsilon\in\{1,-1\}$.

Let $X\in\mathcal H$ and replacing $X$ by $\phi X$ in (\ref{eqn:500}); gives
\[
-2rkX+r\kappa(\phi S\phi X-SX)-2rS\phi S\phi X-s\kappa(\phi BS\phi X-SBX)=0.
\]
By taking the transpose of this equation, we obtain
\begin{align*}
-2rkX+r\kappa(\phi S\phi X-SX)-2r\phi S\phi SX-s\kappa(\phi SB\phi X-BSX)=0.
\end{align*}
It follows that 
\[
2r(\phi S\phi S-S\phi S\phi)X=s\kappa\phi (SB-BS)\phi X-s\kappa(BS-SB)X=0
\]
for any $X\in\mathcal H$. This implies that $S_{|\mathcal H(-1)}$ and $\phi S\phi_{|\mathcal H(-1)}$ are simultaneously diagonalized by 
orthonormal vectors $X_1,\cdots,X_{m-2}$ in $\mathcal H(-1)$, say  
\[
SX_j=\lambda_jX_j, \quad \phi S\phi X_j=-\mu_j X_j, \quad (j\in\{1,\cdots,m-2\}).
\]
It follows that 
\begin{align}\label{eqn:eigen-2}
\lambda_j^2-h\lambda_j=\sigma_2.
\end{align}
Moreover, since each $\phi X_j\in\mathcal H(1)$ and $S\phi X_j=\mu_j\phi X_j$, we also have 
\begin{align}\label{eqn:eigen-1}
\mu_j^2-h\mu_j=\sigma_1.
\end{align}
Letting $X=X_j$ and $Y=\phi X_j$ in (\ref{eqn:500}); gives
\[
2rk+r\kappa(\lambda_j+\mu_j)-2r\lambda_j\mu_j+s\kappa(\lambda_j-\mu_j)=0
\]
which can be rewritten as 
\begin{align}\label{eqn:540}
\left\{(1-k+f)\lambda_j-(1-k-f)\mu_j\right\}\kappa=2f(\lambda_j\mu_j-k).
\end{align}
By a similar calculation, we have
\begin{align*}
(\nabla_XS)W_4
=&(X\tau)W_4-\frac{s(2+k)t}{k^3}\la X,V\ra(\tau\mathbb I-S)\phi V+\frac{2(\tau-t)}{sk}\la X,SB\xi\ra V\\
&+\frac sk(\tau\mathbb I-S)\phi SX- \frac rk(\tau\mathbb I-S)\phi BSX
\end{align*}
and
\begin{align}\label{eqn:510}
0=
& (X\tau)\la Y,W_4\ra-\frac{s(2+k)t}{k^3}\la X,V\ra\la Y,(\tau\mathbb I-S)\phi V\ra \notag\\
&-(Y\tau)\la X,W_4\ra+\frac{s(2+k)t}{k^3}\la Y,V\ra\la X,(\tau\mathbb I-S)\phi V\ra \notag\\
&-\frac{2(\tau-t)}{sk}\la X,V\ra\la Y,SB\xi\ra-\frac{2s}k\la X,V\ra\la Y,B\xi\ra+\frac{rt}k\la X,V\ra\la Y,S\xi\ra \notag\\
&+\frac{2(\tau-t)}{sk}\la Y,V\ra\la X,SB\xi\ra+\frac{2s}k\la Y,V\ra\la X,B\xi\ra-\frac{rt}k\la Y,V\ra\la X,S\xi\ra \notag\\
&-2s\la\phi X,Y\ra+\frac sk\la\tau(\phi S+S\phi)X-2S\phi SX,Y\ra	
 -\frac{r\tau} k\la(\phi BS+SB\phi)X,Y\ra.
\end{align}
Similarly, after putting $X=X_j$ and $Y=\phi X_j$ in (\ref{eqn:510}) gives
\begin{align}\label{eqn:550}
\left\{(1+k+f)\lambda_j-(1+k-f)\mu_j\right\}\tau=2f(\lambda_j\mu_j+k).
\end{align}




In the following calculation, we replace $\lambda_j$ and  $\mu_j$  by $\lambda$ and   $\mu$  respectively for simplicity.
First, after eliminating the variable $h$ in  (\ref{eqn:eigen-3}) and   (\ref{eqn:eigen-4})--(\ref{eqn:eigen-1}),  give 
\begin{align}
\mu\lambda^2=(\mu^2+1+f-\sigma_0)\lambda+(\sigma_0-1+f)\mu		\label{eqn:700} \\
\mu\kappa_\epsilon^2=(\mu^2+1+f-\sigma_0)\kappa_\epsilon+(\sigma_0-2-2\epsilon k)\mu		\label{eqn:710} 
\end{align}
where $\epsilon\in\{1,-1\}$ and we have put $\kappa_{1}=\kappa$  and $\kappa_{-1}=\tau$.
Using  this unify notation,    (\ref{eqn:540}) and (\ref{eqn:550})  can be expressed as  
\begin{align}\label{eqn:720}
\left\{(1-\epsilon k+f)\lambda-(1-\epsilon k-f)\mu\right\}\kappa_\epsilon=2f(\lambda\mu-\epsilon k).
\end{align}
It follows from  (\ref{eqn:710})--(\ref{eqn:720}) that 
\begin{align*}
2\mu f^2(\lambda\mu-\epsilon k)^2
=&(\mu^2+1+f-\sigma_0)f(\lambda\mu-\epsilon k)\left\{(1-\epsilon k+f)\lambda-(1-\epsilon k-f)\mu\right\}	\notag\\
&+\frac\mu 2(\sigma_0-2-2\epsilon k)\left\{(1-\epsilon k+f)\lambda-(1-\epsilon k-f)\mu\right\}^2.
\end{align*}
By applying (\ref{eqn:700}), we can eliminate  the variable $\lambda^2$   in the preceding equation and obtain
\begin{align}\label{eqn:M-2}
k^2\{(\mu^2C_1+C_2)\lambda+(-\mu^2C_1+C_3)\mu\}+\epsilon k \{(\mu^2C_4+C_5)\lambda+(\mu^2C_6+C_7)\mu\}=0
\end{align}
where 
\begin{align*}
C_1=&2f-\sigma_0\\
C_2=&2f(1+f)+(1-f)\sigma_0-\sigma_0^2\\
C_3=&-2f(1+f)-(1-3f)\sigma_0+\sigma_0^2\\
C_4=&-2f(1+f)+\sigma_0\\
C_5=&-2f(1+f)^2+(-1+f+2f^2)\sigma_0+\sigma_0^2\\
C_6=&2f(1-f)-\sigma_0\\
C_7=&2f(1-f^2)+(1-3f)\sigma_0-\sigma_0^2.
\end{align*}
After substituting $\epsilon=\pm1$ in (\ref{eqn:M-2}), we have 
\begin{align}\label{eqn:M-3}
(\mu^2C_1+C_2)\lambda+(-\mu^2C_1+C_3)\mu=(\mu^2C_4+C_5)\lambda+(\mu^2C_6+C_7)\mu=0.
\end{align}
It follows that 
\begin{align}\label{eqn:main-1}
-2C_1\mu^4+D_1\mu^2+D_2=0
\end{align}
where
\begin{align*} 
D_1=&\frac{C_1(C_5+C_7)+C_2C_6-C_3C_4}{2f^2}=-8f(1+f)+8f\sigma_0+\sigma_0^2\\
D_2=&\frac{C_2C_7-C_3C_5}{2f^2}=-4f(1+f)^2+(-2+4f+6f^2)\sigma_0+(3-f)\sigma_0^2-\sigma_0^3.
\end{align*}
On the other hand, by using  (\ref{eqn:700}) and the first equation of (\ref{eqn:M-3}), we obtain
\begin{align}\label{eqn:main-2}
C_1\mu^4+D_3\mu^2+D_4=0
\end{align}
where
\begin{align*}
D_3=&\frac{C_1\{3C_2-C_3+2\sigma_0(\sigma_0-1)\}-2C_3\sigma_0}{4f^2}=4f(1+f)-4f\sigma_0-\sigma_0^2\\
D_4=&\frac{C_2\{C_2-C_3 +2\sigma_0(\sigma_0-1)\}}{4f^2}=2f(1+f)^2+(1-2f-3f^3)\sigma_0-2\sigma_0^2+\sigma_0^3.
\end{align*}
It follows from (\ref{eqn:main-1})--(\ref{eqn:main-2}) 
that $\sigma_0^2(\mu^2+1+f-\sigma_0)=0$.
If  $\sigma_0=0$, then (\ref{eqn:main-2}) reduces to $(\mu^2+1+f)^2=0$, which implies that $f<0$; a contradiction.
Hence we have 
$\mu^2+1+f-\sigma_0=0$.
After substituting this back into (\ref{eqn:main-2}),  gives
$1+f-\sigma_0=0$.
Since $\sigma_0$ is a constant,  $f$ is also a  constant.
By virtue of Lemma~\ref{lem:B+theta}, we have  $t=0$ and so
(\ref{eqn:eigen-3})  implies that $\sigma_0=0$; a contradiction.  
 Consequently, this case does not exist.
\end{proof}

\begin{theorem}\label{thm:Einstein}
Let $M$ be a real hypersurface of the complex quadric $Q^m$, $m\geq3$. 
Then $M$ is pseudo-Einstein, that is, it satisfies (\ref{eqn:Einstein}), 
if and only if one of the following holds
\begin{enumerate}
\item[(a)] $M$ an open part of a tube of radius $r$ around a totally geodesic $Q^{m-1}$ in $Q^m$
where $a=-b=2m$ and $2\cot^2(\sqrt2 r)=m-2$;
\item[(b)] $m=3$, $a=6$, $b=-4$ and $M$ is a $\mathfrak A$-isotropic Hopf hypersurface with principal curvatures
$0$, $\lambda$ and $1/\lambda$.
The corresponding principal curvature space for $0$ is $\mathcal H^\perp$.
Moreover, $\lambda^2\neq1$ on an open dense subset of $M$.
\end{enumerate}
\end{theorem}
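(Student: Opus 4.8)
The plan is to combine the preceding structural results with a case split driven by Theorem~\ref{thm:Eintein0}, which tells us that any pseudo-Einstein $M$ is either $\mathfrak A$-principal or $\mathfrak A$-isotropic. For the converse direction (that the listed hypersurfaces really are pseudo-Einstein), the computations are routine: one plugs the explicit principal curvatures into \eqref{eqn:Ricci-tensor} and checks the constants, so the substance is the forward direction.

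\emph{The $\mathfrak A$-principal case.} Here $f=1$, $g=0$, $V=0$, so \eqref{eqn:P} reduces to $PX=(2m-a-1)X-(3+b)\eta(X)\xi-BX$. By Theorem~\ref{thm:main} (or directly: $M$ is Hopf since pseudo-Einstein with $b\neq0$ forces Hopf by Lemma~\ref{lem:schur-2}, and the $b=0$ subcase must be handled separately) $M$ is an open part of a tube around $Q^{m-1}$ with principal curvatures $\alpha=\sqrt2\cot(\sqrt2 r)$, $\lambda=-\sqrt2\tan(\sqrt2 r)$, $\mu=0$ and eigenspaces $\mathbb R\xi$, $\mathcal H(1)$, $\mathcal H(-1)$; note $B=\mathrm{id}$ on $\mathcal T_\lambda$, $B=-\mathrm{id}$ on $\mathcal T_\mu$, $B\xi=-\xi$. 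So $P=S^2-hS$ has eigenvalues $\alpha^2-h\alpha$ on $\mathbb R\xi$, $\lambda^2-h\lambda$ on $\mathcal H(1)$, $0$ on $\mathcal H(-1)$, with $h=\alpha+(m-1)\lambda$. Matching these against $2m-a-1-1=2m-a-2$ on $\mathcal H(1)$, $2m-a-1+1=2m-a$ on $\mathcal H(-1)$ forces $\lambda^2-h\lambda$ and the $\xi$-eigenvalue to satisfy two scalar equations; since $\alpha\lambda=-2$ (from $\lambda=-2/\alpha$), solving the resulting system pins down $a=-b=2m$ and $2\cot^2(\sqrt2 r)=m-2$, giving (a).

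\emph{The $\mathfrak A$-isotropic case.} Here $f=g=0$, $k=1$, and by Lemma~\ref{lem:pri+iso}(b) $SV=S\phi V=0$; one checks $\xi$ is Hopf (again Lemma~\ref{lem:schur-2} if $b\neq0$; the $b=0$ subcase is ruled out by examining \eqref{eqn:P} on $\mathcal H^\perp$). With $V=0$ no longer true but $f=g=0$, \eqref{eqn:P} reads $PX=(2m-a-1)X-(3+b)\eta(X)\xi+\la X,V\ra V+\la X,B\xi\ra B\xi$; on $\mathcal H^\perp=\vspan\{\xi,V,\phi V\}$ the operators $B\xi=\phi V$, $BV=0$, $B\phi V=\xi$ (Lemma~\ref{lem:B} with $f=g=0$, $k=1$) let one compute $P$ on this $3$-space explicitly. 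Since $S$ kills $V$ and $\phi V$ and fixes $\xi$ as an eigenvector, $P=S^2-hS$ on $\mathcal H^\perp$ has a controlled form; matching eigenvalues and using $B\mathcal H(\varepsilon)=\mathcal H(\varepsilon)$ with Lemma~\ref{lem:7b} ($-2\lambda\mu+\alpha(\lambda+\mu)+2=0$ on $\mathcal H$) forces the $\mathcal H$-principal curvatures to come in pairs $\lambda,1/\lambda$, and then dimension-counting via the eigenspace constraints of $P$ forces $m=3$, $a=6$, $b=-4$, with $0$-eigenspace exactly $\mathcal H^\perp$; the density statement $\lambda^2\neq1$ follows because $\lambda^2\equiv1$ on an open set would make $M$ have constant principal curvatures of a type excluded by Lemma~\ref{lem:7} (it would be neither tube-like nor consistent with $\mathfrak A$-isotropy being non-$\mathfrak A$-principal).

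\emph{Main obstacle.} The hard part will be the $\mathfrak A$-isotropic case: showing that the Codazzi equation together with $SV=S\phi V=0$ and the $P$-commutation $PS=SP$ genuinely forces $m=3$ and the exact constants, rather than merely restricting them. One expects to need a differentiated identity — applying $\nabla$ to $SV=0$ and feeding in Lemma~\ref{lem:B+theta} — to get a scalar equation relating $h$, the $\lambda,1/\lambda$ pair, and $m$, and then to rule out $m\geq4$ one must show the hypothetical common eigenvalue structure on $\mathcal H$ overdetermines the system. Extracting the density claim cleanly (that $\lambda^2=1$ only on a nowhere-dense set) will also require a small continuity/real-analyticity argument rather than a pointwise one.
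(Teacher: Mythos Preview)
Your skeleton is right---split via Theorem~\ref{thm:Eintein0}, then analyse each case---and matches the paper's route. But several load-bearing steps are either missing or pointed at the wrong tool.

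\textbf{Case I, the $b=0$ subcase.} You acknowledge it ``must be handled separately'' and then do not handle it. The paper actually argues: if $M$ were not Hopf then $b=0$ by Lemma~\ref{lem:schur-2}, and from \eqref{eqn:P:f=1} with $S\mathcal H(-1)=0$ one gets $a=2m$ and $PX=-2X$ on $\mathcal H(-1)^\perp$, so the principal curvatures on $\mathcal H(-1)^\perp$ are roots $\lambda,\mu$ of $z^2-hz+2=0$; the trace identity $h=m_1\lambda+(m-m_1)\mu$ then forces $(m_1-1)\lambda^2+2(m-m_1-1)=0$, impossible for $m\geq3$. Without this, your reduction to Theorem~\ref{thm:main} is circular.

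\textbf{Case II, forcing $\alpha=0$ and $m=3$.} These are the heart of the argument and you skip both. With $f=g=0$, $k=1$ one gets $a=2m$ from $PV=0$, and then each $\lambda_j,\mu_j$ on $\mathcal H$ satisfies $z^2-hz+1=0$. On the (open) set where some $\lambda_j\neq\mu_j$, Vieta gives $\lambda_j+\mu_j=h$, $\lambda_j\mu_j=1$; substituting into $2\lambda_j\mu_j-\alpha(\lambda_j+\mu_j)-2=0$ yields $\alpha h=0$, and $h=0$ is impossible (the quadratic would have no real roots), so $\alpha=0$ and hence $b=-4$. For $m\geq4$ the same reasoning on all $j$ gives the trace identity $h=\alpha+\sum_j(\lambda_j+\mu_j)=(m-2)h$, i.e.\ $(m-3)h=0$, a contradiction. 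This is a one-line trace count, not the differentiated Codazzi argument you anticipate in your ``main obstacle'' paragraph.

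\textbf{The density claim $\lambda^2\neq1$.} Your justification via Lemma~\ref{lem:7} fails: that lemma only says $\alpha$ is constant iff $M$ is $\mathfrak A$-principal or $\mathfrak A$-isotropic, and we are already in the isotropic case, so nothing is excluded. The paper's argument is different: if $\lambda_j=\mu_j$ for all $j$ on an open set then $\phi S=S\phi$ there, and by the Berndt--Suh classification of hypersurfaces with isometric Reeb flow in $Q^m$ (reference \cite{berndt-suh2}), such a hypersurface is a tube around $\mathbb CP^k$ with principal curvatures $\cot r$, $-\tan r$ on $\mathcal H$, giving product $-1$; but the pseudo-Einstein equation forces product $+1$. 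This contradiction shows $\operatorname{Int}\mathcal E=\emptyset$, hence $\{\lambda^2\neq1\}$ is open and dense. You need this external classification; a bare continuity or analyticity remark will not produce the contradiction.
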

\begin{proof}
Suppose $M$ is pseudo-Einstein. According to Theorem~\ref{thm:Eintein0}, we have two cases: $M$ is $\mathfrak A$-principal and $M$ is $\mathfrak A$-isotropic.

\textbf{Case I.} $M$ is $\mathfrak A$-principal.

In this case, we have $f=1$, $g=0$ and $V=0$. Hence, (\ref{eqn:P}) is descended to
\begin{align*}
PX=(2m-a-1)X-(2+b)\eta(X)\xi-BX.
\end{align*}
Since $S\mathcal H(-1)=0$, we obtain $2m-a=0$ and hence
\begin{align}\label{eqn:P:f=1}
PX=-X-(2+b)\eta(X)\xi-BX.
\end{align}

We claim that $M$ is Hopf. Suppose that $M$ is not Hopf. Then we have $b=0$ by Lemma~\ref{lem:schur-2}. It follows that 
$PX=-2$ for any $X$ $\perp\mathcal H(-1)$.
Furthermore, $M$ has three distinct principal curvatures
(for otherwise, $M$ must  be Hopf): 
$0$, $\lambda$ and $\mu$ with multiplicities $m-1$, $m_1$ and $m-m_1$ respectively,
where  $\lambda$ and $\mu$ are solutions for 
\begin{align*}
z^2-hz+2=0.
\end{align*}
Hence, we have $\lambda+\mu=h$ and $\lambda\mu=2$ so that 
\begin{align*}
0
=&m_1\lambda+(m-m_1)\mu-h=\frac{(m_1-1)\lambda^2+2(m-m_1-1)}{\lambda}.
\end{align*}
This contradicts the fact $m\geq 3$. Hence the claim is proved.

By Theorem~\ref{thm:tube} and Theorem~\ref{thm:main}, we conclude that $M$ is an open part of 
a tube of radius $r\in]0,\pi/2\sqrt2[$ around the totally geodesic $Q^{m-1}$ in $Q^m$, 
and 
$M$ has three constant principal curvatures 
\[
\alpha=\sqrt{2}\cot(\sqrt2r), \quad \lambda=-\sqrt{2}\tan(\sqrt2r), \quad \mu=0.
\] 
with multiplicities $1$, $m-1$, $m-1$ respectively.
It follows that 
\[
h=\alpha+(m-1)\lambda.
\]
Moreover, $\alpha$ and $\lambda$ satisfying 
\begin{align*}
\alpha^2-h\alpha+2+b=0, \quad \lambda^2-h\lambda+2=0.
\end{align*}
By using these equations, we obtain $\cot^2(\sqrt2 r)=(m-2)/2$ and $b=-2m$.
This gives Case (a) in the theorem.

\textbf{Case II.} $M$ is $\mathfrak A$-isotropic.

In this case, we have $f=0$ and $SV=S\phi V=0$. Hence, $2m-a=0$ and (\ref{eqn:P}) is descended to
\begin{align*}
PX=-X-(3+b)\eta(X)\xi+\la X,V\ra V+\la X,\phi V\ra\phi V.
\end{align*}
It follows that $P$ has at most three distinct eigenvalues 
\begin{eqnarray*}
	\sigma_0=0, \quad
	\sigma_1=-1, \quad 
	\sigma_2=-4-b 
\end{eqnarray*}
with eigenspaces 
\begin{eqnarray*}
	\mathcal T_0=\mathbb R V\oplus \mathbb \mathbb \phi V, \quad
	\mathcal T_1=\mathcal H, \quad 
	\mathcal T_2=\mathbb R \xi.
\end{eqnarray*}
If $\sigma_2\notin\{\sigma_0,\sigma_1\}$, then $M$ is Hopf as $\dim\mathcal T_2=1$.
On the other hand, 
If $\sigma_2\in\{\sigma_0,\sigma_1\}$, then $b\neq0$ and so $M$ is also Hopf by Lemma~\ref{lem:schur-2}.
Hence, we conclude that $M$ is Hopf in this case.
We take an orthonormal basis $\{X_1,\cdots,X_{m-2},\phi X_1,\cdots,\phi X_{m-2}\}$ in $\mathcal H$ such that 
\[
SX_j=\lambda_j X_j, \quad  S\phi X_j=\mu_j\phi X_j \quad (j\in\{1,\cdots,m-2\}).
\]
By (\ref{eqn:210}), we have
\begin{align}\label{eqn:210b}
2\lambda_j\mu_j-\alpha(\lambda_j+\mu_j)-2=0.
\end{align}
Moreover, each $\lambda_j$, $\mu_j$ must be solutions of
\begin{align}\label{eqn:f=0}
z^2-hz+1=0.
\end{align}
We consider 
\[
\mathcal E_j=\{[z]:\lambda_j=\mu_j\}; \quad \mathcal E=\bigcap_{j=1}^{m-2}\mathcal E_j.
\]
If $\Int \mathcal E\neq\emptyset$, then we have $\phi S-S\phi=0$ on $\Int \mathcal E$
and by a result in \cite{berndt-suh2}, there are four principal curvatures: $\alpha=2\cot 2r$, $\lambda_1=\cot r$, $\lambda_2=-\tan r$, $\beta=0$.
The corresponding principal curvature spaces are 
\[\mathcal T_{\alpha}=\mathbb R\xi, \quad \mathcal T_{\lambda_1}, \quad \mathcal T_{\lambda_2}, \quad 
\mathcal T_{\beta}=\mathbb R V\oplus\mathbb R\phi V
\] 
where $\phi \mathcal T_{\lambda_1}=\mathcal T_{\lambda_1}$, $\phi \mathcal T_{\lambda_2}=\mathcal T_{\lambda_2}$ and 
$\mathcal H=\mathcal T_{\lambda_1}\oplus\mathcal T_{\lambda_2}$.
Since $\lambda_1$, $\lambda_2$ are solutions of (\ref{eqn:f=0}),
we have $\lambda_1\lambda_2=1$. This is a contradiction and so $\Int \mathcal E=\emptyset$.

Without loss of generality, we assume that $\mathcal E^c_1\neq\emptyset$.
It follows that $\lambda_1$, $\mu_1$ are distinct solutions of (\ref{eqn:f=0}) on $\mathcal E^c_1$. 
Hence, $\lambda_1+\mu_1=h$ and $\lambda_1\mu_1=1$.
Substituting these into (\ref{eqn:210b}) gives $h\alpha=0$. Hence, $\alpha=0$ in view of (\ref{eqn:f=0})
and so $4+b=-\sigma_2=0$.
Suppose $m\geq4$. 
If there exists $j\in\{2,\cdots,m-2\}$ such that $\mathcal E^c_1\cap\Int \mathcal E_j\neq\emptyset$, 
since there are only three principal curvatures in this case, we have $\lambda_j\in\{\lambda_1,\mu_1\}$, say $\lambda_j=\lambda_1$.
It follows from (\ref{eqn:210b}) that $\lambda_j^2=\lambda_1\mu_1$.
This contradicts the assumption $\lambda_1\neq\mu_1$.
Hence we conclude that $\mathcal E^c_1\supset\cap_{j=1}^{m-2}\mathcal E^c_j\neq\emptyset$. It follows that 
\[
0=(m-2)(\lambda_1+\mu_1)-h=(m-3)h.
\]
This is a contradiction. Hence $m=3$ and $\mathcal E^c=\mathcal E^c_1$, which is open and dense in $M$.
This gives Statement (b).
The converse is trivial.
\end{proof}

\begin{remark}
\begin{enumerate}
\item[(a)]
Pseudo-Einstein Hopf hypersurfaces in $Q^m$ were studied in \cite{suh}. However, the classification was incomplete as we have shown 
the real hypersurfaces listed in \cite[Main Theorem 2(ii)]{suh} is not pseudo-Einstein.
\item[(b)] The author does not know any example of real hypersurfaces stated in Theorem~\ref{thm:Einstein}(b).
However,  even if it exists, this example is local in the sense that it is not extendible to a complete real hypersurface on the basis of 
Theorem~\ref{thm:Einstein-2} below.
\end{enumerate}
\end{remark}

\begin{theorem}\label{thm:Einstein-2}
Let $M$ be a complete real hypersurface of the complex quadric $Q^m$, $m\geq3$. 
Then $M$ is pseudo-Einstein, that is, it satisfies (\ref{eqn:Einstein}), 
 if and only if it is congruent to 
 a tube of radius $r$ around a totally geodesic $Q^{m-1}$ in $Q^m$
where $a=-b=2m$ and $2\cot^2(\sqrt2 r)=m-2$.
\end{theorem}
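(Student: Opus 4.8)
The plan is to promote the local classification of Theorem~\ref{thm:Einstein} to a global one by imposing completeness. By Theorem~\ref{thm:Einstein}, a pseudo-Einstein real hypersurface $M$ in $Q^m$, $m\geq3$, is either (a) an open part of a tube of radius $r\in]0,\pi/2\sqrt2[$ around a totally geodesic $Q^{m-1}$, with $a=-b=2m$ and $2\cot^2(\sqrt2 r)=m-2$, or (b) $m=3$ and $M$ is the exceptional $\mathfrak A$-isotropic Hopf hypersurface of that theorem, with principal curvatures $0,\lambda,\lambda^{-1}$, eigenspace $\mathcal T_0=\mathcal H^\perp$, and $\lambda^2\neq1$ on a dense open subset $M^\ast$. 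Conversely, a tube as in (a) is compact, hence complete, and pseudo-Einstein by Theorem~\ref{thm:tube} and Theorem~\ref{thm:Einstein}(a). So the theorem amounts to two assertions: (i) a complete $M$ of type (a) is globally congruent to that tube; (ii) no complete real hypersurface is of type (b).

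For (i), on such an $M$ the unit normal is $\mathfrak A$-principal everywhere and, as in the proof of Theorem~\ref{thm:main}, $M$ carries the three constant principal curvatures $\alpha=\sqrt2\cot(\sqrt2 r)$, $\lambda=-\sqrt2\tan(\sqrt2 r)$, $\mu=0$, with eigenspace decomposition $\mathbb R\xi\oplus\mathcal H(1)\oplus\mathcal H(-1)$. Running the focal-map construction of that proof over all of $M$, the Jacobi fields (\ref{eqn:jacobi}) show that $\Phi_r$ has constant rank $2m-2$; its image is a complete totally geodesic complex hypersurface, hence a totally geodesic $Q^{m-1}$; and $M$ is recovered as the distance-$r$ tube over it. Completeness of $M$ together with completeness of the model tube upgrades the local congruence of Theorem~\ref{thm:main} to a global one, so $M$ is congruent to the stated tube, and the values $a=-b=2m$, $2\cot^2(\sqrt2 r)=m-2$ are already part of Theorem~\ref{thm:Einstein}(a).

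The substance of the proof is (ii). Here $m=3$, $M^5\subset Q^3$ is Hopf with $S\xi=0$ and $SV=S\phi V=0$ by Lemma~\ref{lem:pri+iso}(b); on the $2$-dimensional $J$-invariant distribution $\mathcal H=\vspan\{E,\phi E\}$, with $BE=E$ and $B\phi E=-\phi E$, one has $SE=\lambda E$ and $S\phi E=\lambda^{-1}\phi E$, where $\lambda^2\neq1$ on $M^\ast$. I would substitute these data (recalling $f=g=0$, $k=1$, $B\xi=\phi V$, and the identities of Lemma~\ref{lem:B+theta}) into the Codazzi equation (\ref{eqn:50}): pairing $(\nabla_ES)\phi E-(\nabla_{\phi E}S)E$, and similar Codazzi pairings $(\nabla_XS)Y-(\nabla_YS)X$, against $\xi,V,\phi V,E,\phi E$ successively determines the connection $1$-form $q$ on $\mathcal H$, the covariant derivatives of the frame, and finally $E\lambda$ and $(\phi E)\lambda$ as explicit functions of $\lambda$; differentiating once more — or comparing the Gauss-equation and frame expressions for the curvature of the plane $\mathcal H$ — yields an ordinary differential equation for $\lambda$ along an integral curve of $E$ on $M^\ast$. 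The decisive step is a boundary analysis of this equation: its solution cannot persist for all arc-length values without $\lambda$ reaching one of the singular values $0$, $\pm1$, or $\infty$ in finite arc length, and each is impossible on a complete $M$ — at $\lambda=0$ the principal curvature $\lambda^{-1}$ blows up, at $\lambda=\pm1$ the eigenvalue $\lambda$ collides with $\lambda^{-1}$ so that $\mathcal T_0$ would strictly contain $\mathcal H^\perp$, contradicting Theorem~\ref{thm:Einstein}(b), and at $|\lambda|=\infty$ the shape operator is undefined. Hence no complete type-(b) hypersurface exists.

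I expect the ODE-and-completeness analysis of step (ii) to be the main obstacle: the substitutions into the Codazzi equation are routine but bookkeeping-heavy, and the delicate point is to extract an ODE whose maximal interval of definition is genuinely finite — not merely a first-order relation admitting global solutions — and to exclude the a priori possibility that $\lambda$ stays bounded and tends to $\pm1$ only in the limit toward $\partial M^\ast$. Step (i) and the converse are, by contrast, straightforward consequences of the tube picture of Section~\ref{sec:tube}.
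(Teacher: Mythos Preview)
Your treatment of part (i) and the converse is fine and matches the paper. For part (ii), however, your route diverges from the paper's and carries a real gap.

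The paper does not attempt an ODE/blow-up analysis along integral curves at all. Instead it uses a maximum-principle argument: since $M$ is complete (and hence compact as a closed hypersurface in the compact $Q^3$), the function $\la S,S\ra=\lambda^2+\lambda^{-2}$ attains a maximum at some point $[z]$, where necessarily $\lambda^2>1$ and $\lambda$ is smooth. Two applications of the Codazzi equation with $X$ a unit $\lambda$-eigenvector in $\mathcal H$ give
\[
V\lambda=(1+\lambda^2)\la BX,X\ra,\qquad \phi V\lambda=-(1+\lambda^2)\la BX,\phi X\ra,
\]
so criticality forces $\la BX,X\ra=\la BX,\phi X\ra=0$. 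Since $\dim\mathcal H=2$ and $B\mathcal H\subset\mathcal H$, this means $BX=0$, contradicting that $B_{|\mathcal H}$ is an involution with eigenvalues $\pm1$. That is the entire argument; no integration or boundary analysis is needed.

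Your proposal has two weaknesses. First, you assume you can take the $S$-eigenvector $E\in\mathcal H$ with $BE=E$; nothing in the setup guarantees that $S_{|\mathcal H}$ and $B_{|\mathcal H}$ are simultaneously diagonalizable, and in fact the paper's contradiction hinges precisely on the $S$-eigenvector \emph{not} being a $B$-eigenvector at the critical point. Second, the core step---deriving an ODE for $\lambda$ along an $E$-curve and proving finite-time blow-up, while ruling out solutions that merely approach $\lambda^2=1$ asymptotically---is left entirely programmatic. You yourself flag this as the main obstacle, and it is: without the explicit ODE and its qualitative analysis, the argument is incomplete. The paper's critical-point trick sidesteps all of this with two Codazzi computations in the $V$ and $\phi V$ directions, which you might consider adopting.
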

\begin{proof}
Suppose $M$ is complete pseudo-Einstein and satisfies the properties in Theorem~\ref{thm:Einstein}(b).
Then we have $f=g=0$ and $k=1$.  
Taking a unit vector field $X$ tangent to $\mathcal H$ with $SX=\lambda X$ and 
$S\phi X=(1/\lambda)\phi X$.  
Furthermore,  taking the reciprocal if necessary,  we have $\lambda^2\geq 1$.
Note that $\la S,S\ra=\lambda^2+(1/\lambda)^2\geq 2$ with equality holds if and only if $\lambda^2=1$.
Since $M$ is compact, $\la S,S\ra$ is bounded.
Suppose the maximum for $\la S,S\ra$ attained at a  point $[z]\in M$. Then $\lambda^2>1$  and 
so $\lambda$ is differentiable at  $[z]$
by Theorem~\ref{thm:Einstein}(b).

By the Codazzi equation, we have 
\[
\la (\nabla_XS)V-(\nabla_VS)X,X\ra=-\la BX,X\ra.
\]
At the points on which $\lambda$ is differentiable, by applying Lemma~\ref{lem:B+theta} and Lemma~\ref{lem:pri+iso}
to the preceding equation, we have 
\begin{align}\label{eqn:Vl}
V\lambda=(1+\lambda^2)\la BX,X\ra.
\end{align}
Similarly, 
we have  
\begin{align}\label{eqn:Vl-2}
\phi V\lambda
=&\la(\nabla_{\phi V}S)X,X\ra =\la (\nabla_XS)\phi V,X\ra-\la BX,\phi X\ra \nonumber\\
=&-(1+\lambda^2)\la BX,\phi X\ra.
\end{align}
Since $[z]$ is a critical point, 
it follows from (\ref{eqn:Vl})--(\ref{eqn:Vl-2}) that $\la BX,X\ra=\la BX,\phi X\ra=0$ at the point $[z]$.
Since $\dim\mathcal H=2$ and $B\mathcal H\subset\mathcal H$, 
we get $BX=\la BX,X\ra X+\la BX,\phi X\ra\phi X=0$. This is a contradiction and the proof is completed.
\end{proof}

\bibliographystyle{amsplain}

\end{document}